\theoremstyle{plain} 
\newtheorem{step}{Step} 
\newtheorem{thm}{Theorem}[section] 
\newtheorem{theorem}[thm]{Theorem} 
\newtheorem{cor}[thm]{Corollary} 
\newtheorem{corollary}[thm]{Corollary} 
\newtheorem{lemma}[thm]{Lemma} 
\newtheorem{proposition}[thm]{Proposition} 
\theoremstyle{remark} 
\newtheorem{remark}[thm]{Remark}
\theoremstyle{definition} 
\newtheorem{definition}[thm]{Definition} 
\def\al{{\alpha}}
\def\be{{\beta}}
\def\om{{\omega}}
\def\Om{{\Omega}}
\let\La\Lambda
\def\si{{\sigma}}
\def\epsilon{{\varepsilon}}
\def\ep{{\varepsilon}}
\def\phi{{\varphi}}
\DeclareMathAlphabet{\doba}{U}{msb}{m}{n}
\gdef\mR{\doba{R}}
\def\M{\mathcal{M}_M}
\newcommand{\xp}[2]{X_+^{#1}(#2)} 
\newcommand{\xm}[2]{X_-^{#1}(#2)} 
\newcommand{\xpm}[2]{X_{\pm}^{#1}(#2)}
\def\vol{{\mathop{\rm vol}}}
\def\scal{{s}}
\newcommand{\definedas}{\mathrel{\raise.095ex\hbox{\rm :}\mkern-5.2mu=}}
\newcounter{mnotecount}[section]
\def\supp{\mathrm{supp}}
\begin{document}

\title{Mass functions of a compact manifold}


\author{Andreas Hermann}
\address{Andreas Hermann,
  Institut f\"ur Mathematik,
  Universit\"at Potsdam,
  Karl-Liebknecht-Str.\,24-25,
  14476 Potsdam-Golm
  Germany} 
\email{hermanna@uni-potsdam.de} 

\author{Emmanuel Humbert}
\address{Emmanuel Humbert, LMPT \\
Universit\'e de Tours,
Parc de Grandmont,
   37200 Tours,
   France} 
\email{emmanuel.humbert@lmpt.univ-tours.fr}

\keywords{Yamabe operator; Yamabe invariant; surgery; positive mass theorem}

\begin{abstract}
Let $M$ be a compact manifold of dimension $n$.  In this paper, we introduce the {\em Mass Function}  $a \geq 0 \mapsto \xp{M}{a}$ (resp. $a \geq 0  \mapsto \xm{M}{a}$) which is defined 
as the supremum (resp. infimum) of the masses of all metrics on $M$  whose Yamabe constant is larger than $a$ and which are flat on a ball of radius~$1$ and centered at  a point $p \in M$. Here, the mass of a metric flat around~$p$ is the constant term in the expansion of the Green function of the conformal Laplacian at~$p$. We show that these functions are well defined and have many properties which allow to obtain applications to the Yamabe invariant (i.e. the supremum of Yamabe constants over the set of all metrics on $M$). 
\end{abstract}

\maketitle

\tableofcontents


\section{Introduction} 

Let $(M,g)$ be a closed Riemannian manifold of dimension $n\geq 3$ and denote by
\begin{equation*}
  L_g:=\Delta_g+\frac{n-2}{4(n-1)}\scal_g: \quad C^{\infty}(M)\to C^{\infty}(M)
\end{equation*}
the conformal Laplace operator of $g$, where $\scal_g$ is the scalar curvature of $g$ and $\Delta_g$ is the Laplace-Beltrami operator with non-negative spectrum. 
Assume that the metric $g$ is flat on an open neighborhood of a point $p\in M$ and that all eigenvalues of $L_g$ are strictly positive.
Then it is well-known that there exists a unique Green function $G_g$ of $L_g$ at $p$, i.\,e.\,in the sense of distributions we have $L_gG_g=\delta_p$, the function $G_g$ is smooth and strictly positive on $M\setminus\{p\}$ and as $x\to p$ we have 
\begin{equation*}
G_g(x)=\frac{1}{(n-2)\omega_{n-1}r(x)^{n-2}}+m(g,p)+o(1)
\end{equation*}
where $\omega_{n-1}$ is the volume of the standard sphere of dimension $n-1$, the function $r$ denotes the Riemannian distance from $p$ and $m(g,p)\in\mR$ is a number called the mass of $g$ at $p$. 
This quantity is related to the so-called ADM mass of an asymptotically flat Riemannian manifold.
The study of the mass has led to many interesting results in geometric analysis and General Relativity.
An example is an application to the so-called conformal Yamabe constant of $(M,g)$ defined by
\begin{equation*}
Y(M,g):=\inf \int_{M}\scal_g\,dv^g,
\end{equation*}
where the $\inf$ is taken over the set of all Riemannian metrics on $M$ which have unit volume and are conformal to $g$.
Namely, in a famous article \cite{schoen:84}, Richard Schoen used positivity of the mass $m(g,p)$ to prove that $Y(M,g)<Y(S^n,g_{\mathrm{can}})$ if $(M,g)$ satisfies the assumptions above and is not conformally diffeomorphic to the standard sphere $(S^n,g_{\mathrm{can}})$.

In this article we consider the dependence of the mass on the Yamabe constant $Y(M,g)$. 
We define two functions $a\mapsto\xp{M}{a}$ and $a\mapsto\xm{M}{a}$ whose values are a $\sup$ and an $\inf$ of masses $m(g,p)$ respectively taken over the set of all Riemannian metrics $g$ with $Y(M,g)>a$ which are flat on a ball of radius $1$ centered at $p\in M$ (see Definition~\ref{def:mass_functions}).
We prove that for small values of $a$ the values $\xp{M}{a}$ and $\xm{M}{a}$ decrease and increase respectively under surgery of codimension at least $3$ (see Theorem~\ref{main_surgery}).
Finally, we give an application to the smooth Yamabe invariant of $M$ defined by 
\begin{equation*}
\sigma(M):=\sup Y(M,g)
\end{equation*}
where the $\sup$ is taken over the set of all Riemannian metrics on $M$.
The question of whether for a given smooth manifold $M$ one has $\sigma(M)<\sigma(S^n)$ is open in general.
We prove that if $\xp{M}{\sigma(M)}>0$ then we have $\sigma(M)<\sigma(S^n)$. 
The precise statement is given in Theorem \ref{thm:xp_sigma}. 

In the proofs of these theorems we use a surgery result obtained by the second author together with Ammann and Dahl \cite{ammann.dahl.humbert:13} and a variational characterization of the mass $m(g,p)$ obtained by the two authors of the present article \cite{hermann.humbert:16}. \\

\noindent {\bf Acknowledgement:} E. Humbert is supported by the project THESPEGE (APR IA), R\'egion Centre-Val de Loire, France, 2018-2020.

\section{Notation} 

Let $M$ be a closed manifold of dimension $n \geq 3$. 
The set of Riemannian metrics on $M$ will be denoted by $\M$. 
For $g \in \M$, we denote by 
\begin{equation*}
  L_g:=\Delta_g+\frac{n-2}{4(n-1)}\scal_g: \quad C^{\infty}(M)\to C^{\infty}(M)
\end{equation*}
the conformal Laplace operator of $g$, where $\scal_g$ is the scalar curvature of $g$ and $\Delta_g$ is the Laplace-Beltrami operator with non-negative spectrum. 
Moreover, we write $N:=\frac{2n}{n-2}$ and we denote by 
\begin{equation*}
  Y(g):=Y(M,g)=\inf \Big\{\frac{\int_M u L_g u\,dv^g}{\|u\|_{L^N}^2}\, \Big| \,u\in C^{\infty}(M)\setminus\{0\}\Big\}
\end{equation*}
the (conformal) Yamabe constant of $g$ and by $\sigma(M):=\sup_{g \in \M} Y(g)$ the (smooth) Yamabe invariant of $M$. 
We will write $Y(g)$ instead of $Y(M,g)$ since $M$ will always be clear from the context.
We have $Y(g)>0$ if and only if all eigenvalues of $L_g$ are strictly positive.
In the following, we will always assume that $\sigma(M)>0.$ 
We define, for any $a \in [0,\sigma(M)[$,
\begin{equation*}
  Z_M(a) := \left\{ g \in \M \mid  Y(g) > a \right\}
\end{equation*}
\begin{equation*}
  \Om^a_M 
  := \left\{ (g , p) \in Z_M(a) \times M \mid  B^g_p(1) \hbox{ is isometric to } \; \mathbb{B} \right\}. 
\end{equation*}
where $B^g_p(1)$ stands for the ball with center $p$ and radius $1$ with respect to the metric $g$ and where $\mathbb{B}$ is the standard Euclidean unit ball of dimension $n$.  
Note that these sets are not empty as soon as $\sigma(M) >0$ (see the first item of Proposition \ref{basic}).   \\

\noindent Let $\eta$ be a smooth function on $M$ such that $\eta\equiv\frac{1}{(n-2)\omega_{n-1}}$ on $B^g_p(\frac{1}{2})$ and $\supp(\eta)\subset B^g_p(1)$, where $\omega_{n-1}$ denotes the volume of $S^{n-1}$ with the standard metric. 
If $(g,p)\in \Om^a_M$ for some $a\geq 0$ then there exists a unique Green function $G_g$ of $L_g$ at $p$ and we have for all $x\in M\setminus\{p\}$:
\begin{equation*}
  G_g(x) = \eta(x)r(x)^{2-n}  + m(g,p) + \alpha(x),
\end{equation*}
where $r(x)$ denotes the Riemannian distance of $x$ and $p$ with respect to~$g$, $\alpha$ is a smooth function defined on all of $M$ which is harmonic on $B^g_p(\frac{1}{2})$ and satisfies $\alpha(p)=0$ and $m(g,p)\in\mR$ is a number called the mass of~$g$ at~$p$.

\noindent We recall that $m(g,p)$ has a variational characterization established in \cite{hermann.humbert:16}. 
Namely, the function $F_{\eta}$: $M\to\mR$ defined by 
\begin{equation*}
F_{\eta}(x):=
\left\{\begin{array}{ll}
\Delta_g(\eta r^{2-n})(x),&x\neq p\\
0, & x=p
\end{array}\right.
\end{equation*}
is smooth on $M$. 
For every $u\in C^{\infty}(M)$ we define 
\begin{equation*}
J^g_p(u):=\int_{M\setminus\{p\}}
\eta r^{2-n}F_{\eta}\,dv^g
+2\int_{M}
uF_{\eta}\,dv^g
+\int_{M}uL_g u\,dv^g.
\end{equation*}
Then, it was proven in \cite{hermann.humbert:16} that 
\begin{equation} \label{vcmass}
- m(g,p)= \inf \{J^g_p(u)\mid u \in C^{\infty}(M)\}
\end{equation}
and that the infimum is attained for the smooth function $\beta$ defined by 
\begin{equation*}
  \beta(x):=m(g,p)+\alpha(x).
\end{equation*}
We say that a closed manifold satisfies PMT (for Positive Mass Theorem) if for every metric $g$ on $M$ and for all points $p\in M$ such that $g$ is flat on an open neighborhood of $p$ and $Y(g)>0$ we have $m(g,p)\geq 0$.
It is conjectured that every closed manifold satisfies PMT.
This conjecture has been proved in some special cases (see e.\,g.\,\cite{schoen.yau:79}, \cite{witten:81}, \cite{schoen.yau:88}).
A complete proof has been announced by Lohkamp \cite{lohkamp:06} and Schoen-Yau \cite{schoen.yau:17}.

\section{Upper and lower mass functions of $M$}
\begin{definition} 
\label{def:mass_functions}
 The {\em upper} (resp. {\em lower}) mass function $X^M_+:[0,\sigma(M)] \to \mR \cup \{\pm \infty \}$  (resp. 
  $X^M_-:[0,\sigma(M)] \to \mR \cup \{\pm \infty \}$) are defined by : for all $a \in [0,\sigma(M)]$
$$\xp{M}{a}:= \limsup_{\ep \to 0} \sup_{(g,p) \in \Om^{\max(a-\ep,0)}_M} m(g,p)$$
(resp.  
$$\xm{M}{a}:= \liminf_{\ep \to 0} \inf_{(g,p) \in \Om^{\max(a-\ep,0)}_M} m(g,p).)$$
\end{definition}
\noindent Note that the maximum in the definitions above is only to ensure that  $\xm{M}{a}$ and $\xp{M}{a}$ are well defined when $a=0$. If $a>0$, one can just replace $\Om^{\max(a-\ep,0)}_M$ by $\Om^{a-\ep}_M$ in these defintions. The goal of this paper is to establish several properties of $\xpm{M}{a}$.

\section{Basic properties of $X^M_\pm$} \label{basic_section}
\begin{proposition} \label{basic}
It holds that 
 \begin{enumerate}
 \item $X^M_+$ and $X^M_-$ are well defined for all $a \in [0,\sigma(M)]$ as soon as $\sigma(M)>0$ ;
 \item For all $a  \in [0,\sigma(M)]$, $\xp{M}{a} \geq \xm{M}{a}$ ;
 \item $X^M_+$ is a decreasing function of $a$ and $X^M_-$ is an increasing function of $a$ and they are both left continuous;
 \item For all $a \in [0,\sigma(M)]$, $0 \geq \xm{M}{a} > - \infty$ ;
 \item For all $a>0$, $\xp{M}{a} < +\infty$ ;
 \item $\xm{M}{0} = 0$ if and only if $M$ possesses the property PMT ;
 \item Let $M,N$ be compact manifolds of dimension $n \geq 3$  with positive Yamabe invariant. 
 Then, for all $a>0$ we have
 \begin{equation*}
   \xp{M \amalg N}{a} = \max(\xp{M}{a},\xp{N}{a}) \; \hbox{ and } \; \xm{M \amalg N}{a} = \min(\xm{M}{a},\xm{N}{a}).
 \end{equation*}
  \end{enumerate}
\end{proposition}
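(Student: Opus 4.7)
The plan is to address the seven items in three groups.

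Items (1)--(3) are essentially formal. For (1) one checks that $\Om^a_M\neq\emptyset$ whenever $a<\sigma(M)$: starting from any metric with $Y(g)>a$, a standard local conformal flattening near some point (with arbitrarily small loss of Yamabe constant) followed by a global rescaling, which preserves the conformal invariant $Y$, arranges that the flat ball has radius exactly one. Item (2) is immediate from $\limsup\sup\geq\liminf\inf$ over the same family. For (3), monotonicity follows from the inclusion $\Om^{b-\ep}_M\subseteq\Om^{a-\ep}_M$ for $a\leq b$; left continuity follows from the observation that $\xpm{M}{a}$ is a monotone limit in $\ep$, and for $a_k\nearrow a$ one has $\Om^{a_k-\ep/2}_M\subseteq\Om^{a-\ep}_M$ for large $k$, which sandwiches the limit.

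The analytic heart is items (4) and (5), resting on the variational characterization \eqref{vcmass}. The explicit minimizer $\beta=-L_g^{-1}F_\eta$ gives the identity
\begin{equation*}
m(g,p)=-J^g_p(0)+\langle F_\eta, L_g^{-1}F_\eta\rangle_{L^2(M,g)}.
\end{equation*}
The crucial observation is that, with $\eta$ pulled back from a fixed cutoff on $\mathbb{B}$, both $J^g_p(0)$ and $F_\eta$ depend only on the Euclidean structure of $B^g_p(1)$; thus $C:=J^g_p(0)$ and $\|F_\eta\|_{L^{N'}}$ (with $N':=N/(N-1)$) are universal constants depending only on $n$. Since $L_g>0$ on $Z_M(a)$, the inner product is nonnegative, so $m(g,p)\geq -C$, proving $\xm{M}{a}>-\infty$. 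For the upper bound in (5), set $w:=L_g^{-1}F_\eta$; the Yamabe inequality $\int w L_g w\geq Y(g)\|w\|_{L^N}^2$ together with H\"older gives $\|w\|_{L^N}\leq\|F_\eta\|_{L^{N'}}/Y(g)$, whence $\langle F_\eta,L_g^{-1}F_\eta\rangle\leq\|F_\eta\|_{L^{N'}}^2/a$, and $\xp{M}{a}\leq -C+\|F_\eta\|_{L^{N'}}^2/a<\infty$ when $a>0$. The remaining half of (4), $\xm{M}{a}\leq 0$, follows from a scaling argument: if $(g,p)\in\Om^{a-\ep}_M$ and $\lambda\geq 1$, then $(\lambda^2 g,p)$ stays in $\Om^{a-\ep}_M$ ($Y$ is scale invariant and $B^{\lambda^2 g}_p(1)=B^g_p(1/\lambda)$ is Euclidean of radius one), while the conformal transformation law of Green functions gives $m(\lambda^2 g,p)=\lambda^{2-n}m(g,p)\to 0$; the infimum along this one-parameter family is therefore $\leq 0$ regardless of the sign of $m(g,p)$.

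Items (6) and (7) are direct corollaries. For (6), PMT is by definition $m\geq 0$ throughout $\Om^0_M$, which combined with (4) is equivalent to $\xm{M}{0}=0$; the scaling $g\mapsto g/r_0^2$ shows that every $(g,p)$ with $g$ flat on a ball of radius $r_0$ and $Y(g)>0$ may be rescaled to an element of $\Om^0_M$ with the sign of $m$ preserved, so the restriction to flat unit balls is cost-free. For (7), the subadditivity of $t\mapsto t^{2/N}$ yields $Y(g_M\sqcup g_N)=\min(Y(g_M),Y(g_N))$, and the Green function of $L_{g_M\sqcup g_N}$ at $p\in M$ is $G_{g_M}$ on $M$ and vanishes on $N$ (since $L_{g_N}$ is invertible when $Y(g_N)>0$), so $m(g_M\sqcup g_N,p)=m(g_M,p)$ and the sup/inf over $\Om^a_{M\amalg N}$ decompose as max/min over the components. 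The main technical obstacle in the whole proof is the conformal flattening in (1): realizing a flat unit ball while preserving $Y>a$ requires a careful local conformal modification argument.
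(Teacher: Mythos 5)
Your treatment of items (2)--(7) matches the paper's argument in substance, and in a few places is slightly tighter. In (4)--(5) the paper tests the variational characterization \eqref{vcmass} with a near-minimizer and bounds $J^g_p$ below by a parabola in $X_g=\|u\|_{L^N}$; you instead plug in the explicit minimizer $\beta=-L_g^{-1}F_\eta$ to get the identity $m(g,p)=-J^g_p(0)+\langle F_\eta,L_g^{-1}F_\eta\rangle$, which gives the universal lower bound from positivity of $L_g$ and the upper bound from $\langle F_\eta,L_g^{-1}F_\eta\rangle\leq\|F_\eta\|_{L^{N'}}^2/a$ via the Yamabe inequality and H\"older. This is the same estimate, just expressed on the minimizer rather than on a near-minimizer, and it does rely on the explicit form of $\beta$ given in the paper, so it buys clarity but not generality. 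Your proofs of (6) and (7) are correct and essentially the paper's, and your remark in (6) about rescaling from an arbitrary flat-ball radius to radius $1$ is a small point the paper elides.

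There is, however, a genuine gap in your item (1). You describe the construction as ``a standard local conformal flattening near some point (with arbitrarily small loss of Yamabe constant).'' A conformal change cannot flatten a generic metric near a point: local conformal flatness requires the vanishing of the Weyl tensor for $n\geq 4$ (resp.\ the Cotton tensor for $n=3$), so ``conformal flattening'' simply does not exist for an arbitrary $g$. Moreover a genuine conformal change preserves the Yamabe constant exactly, so ``arbitrarily small loss'' already signals that what you have in mind cannot be conformal. The correct construction (the paper's) is a \emph{non-conformal} interpolation $g_\ep=(1-\chi_\ep)g+\chi_\ep\,\xi$ between $g$ and the flat metric $\xi$ in normal coordinates, using a cutoff $\chi_\ep$ with $|d\chi_\ep|\lesssim\ep^{-1}$, $|\nabla^2\chi_\ep|\lesssim\ep^{-2}$, followed by the rescaling $h_\ep=\ep^{-2}g_\ep$. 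The nontrivial point you wave away is precisely that $Y(g_\ep)\to Y(g)$: scalar curvature depends on two derivatives of the metric, so a $C^0$-small modification can produce scalar curvature of size $\ep^{-1}$, and one needs the compensating smallness of $\vol(B_p(2\ep))$ to control $\int\scal_{g_\ep}u^2$ in the Yamabe functional. This is exactly the content of Lemma~\ref{appendixlem} in the paper's appendix, and it is not a formality; your proof of (1) is incomplete without it.
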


\begin{proof}
\noindent $(1)$ It suffices to show that $\Om^a(M)$ is not empty if $ 0<a <  \sigma(M)$. 
We fix $a' \in (a,\sigma(M))$. First, it is clear that there exists a metric $g$ with $Y(g)=a'$. Hence, let $\xi= \sum_i dx_i^2$, where $(x_1,\cdots,x^n)$ is a system of normal coordinates at some $p\in M$, be a flat metric around $p$ and let 
$g_\ep := (1-\eta_\ep) g + \eta_\ep \xi$, where $\eta_\ep:M \to [0,1]$ is a cut-off function equal to $1$ on $B_p^g(\ep)$, equal to $0$ outside $B_p^g(2\ep)$ and such that $|d \eta_\ep| \leq \frac{2}{\ep}$ and $|\nabla^2\eta_\ep|\leq\frac{2}{\ep^2}$. 
By Lemma \ref{appendixlem} we have $\lim_{\ep\to 0} Y(g_\ep)=Y(g)$.
Now, the metric $h_\ep= \frac{1}{\ep^2}g_\ep$ is flat on $B_p^{h_\ep}(1)$. If $\ep$ is small enough then $Y(h_\ep) = Y(g_\ep) >a$ which implies that $h_\ep \in \Om^a(M)$. \\

\noindent $(2)$ and $(3)$ are clear from the definitions. \\

\noindent $(4)$ Let $u$ be any nonzero smooth function compactly supported in the Euclidean ball $\mathbb{B}$. Let $(g,p) \in \Om_M^a$ for some $a$. From the definition of $\Om_M^a$, we can identify $(B_p(1),g)$ with $\mathbb{B}$ so that $u$ can be considered as a test function in the 
  variational characterization (\ref{vcmass}) which provides
  $$-m(g,p) \leq J^p_g(u).$$
  The inequality $\xm{M}{a} > - \infty$ follows by noticing that $J^p_g(u)$ does not depend on the choice of $a \geq 0$ nor on the choice of $(g,p) \in \Om_M^a$.\\
  
\noindent Let us prove now that $\xm{M}{a} \leq 0$.   
It comes from the facts that if $(g,p) \in  \Om_M^a$ then $(bg,p) \in  \Om_M^a$ for any $b \geq 1$, and also that for any such $b$ 
  $$m(b g,p) = b^{1-\frac{n}{2}}m(g,p). $$

 \noindent $(5)$ 
   Let t$a>0$ and let $(g,p) \in  \Om^{a}_M$. We have to show that $m(g,p)$ is bounded by a constant which depends only on $a$ but not on $(g,p)$.  Let $u \in C^{\infty}(M)$. In what follows, $C>0$ denotes a positive constant which might depend on $a$ but not on $(g,p)$. 
  By the variational characterization \eqref{vcmass}, choose $u$ so that  
  $$-m(g,p) + 1 \geq  J_{g}^{p}(u)$$
  From the definition of $J_g^p$, one has
 $$J_{g}^{p}(u) \geq - C + 2\int_{M}
u F_{\eta}\,dv^{g}
+\int_{M}u  L_{g} u\,dv^{g}.$$
Using that fact that $Y(g) \geq a$ and using  H\"older inequality, one gets 
$$J_{g}^{p}(u) \geq -C -2 \| F_\eta\|_{L^{\infty}} \left( \int_{B_p^g(1)}  |u|^N dv^{g} \right)^{\frac{1}{N}} \vol(B_p^g(1))^{\frac{N-1}{N}}
+a \left(\int_{M}|u|^N,dv^{g} \right)^{\frac{2}{N}}.$$
Set now
$$X_g=  \left( \int_M |u|^N dv^{g} \right)^{\frac{1}{N}},$$
we obtain that there exists some $C',C''>0$  independent of $(g,p)$ such that 
\begin{equation} \label{Jgeq} 
 J_{g}^{p}(u) \geq C-C' X_g +C''X_g^2.
\end{equation}

\noindent This quantity is bounded from below independently of $(g,p)$. This show that $m(g,p)$ is bounded from above by a constant independent of $(g,p) \in \Om^a_M$. This implies that for all $a>0$, $\xp{M}{a} < +\infty$. \\

\noindent $(6)$ Clearly the property PMT for a manifold is equivalent to $X^M_-(0) \geq 0$. Since $X^M_-(0) \leq 0$ by item $(4)$, the result follows.\\

 \noindent $(7)$ Let $a \in (0, \sigma(M \amalg N)] = (0 , \min (\sigma(M), \sigma(N)) ]$ and  $\ep > 0$. On the one hand, let $(g,p) \in \Om^{a-\ep}_{M \amalg N}$ where $p \in M$.  Then  $g$ decomposes as 
 $g= g_M \amalg g_N$ where $g_M \in \M$ and $g_N \in \mathcal{M}_N$. We have $a-\ep < Y(g)= \min(Y(g_M),Y(g_N))$. Since $p \in M$, this implies that $(g_M,p) \in \Om^{a-\ep}_M$. 
 Let $\rho>0$. If $\ep$ is small enough, it follows from the definition of $\xp{M}{a}$ that 
 $m(g,p)=m(g_M,p) \leq X^M_+(a)+\rho \leq \max( X^M_+(a), X^N_+(a)) + \rho$.
 In the same way, if $p \in N$, $m(g,p) \leq \max( X^M_+(a), X^N_+(a))+ \rho $. From these inequalities, we obtain 
 $$X^{M \amalg N}_+(a) \leq  \max( X^M_+(a), X^N_+(a)) + \rho$$
 and since $\rho$ is arbitrary
 $$X^{M \amalg N}_+(a) \leq  \max( X^M_+(a), X^N_+(a)).$$
 On the other hand, let $(g_M,p) \in \Om^{a-\ep}_M$ and let $g_N$ any metric on $N$ with $Y(g_N) \geq a- \ep$.  If $\ep$ is small enough, then $m(g_M,p) = m(g_M \amalg g_N,p) \leq X^{M \amalg N}_+(a) + \rho$. Hence $X^M_+(a) \leq X^{M \amalg N}_+(a)+ \rho$. The same holds for $N$ and the result follows. \\
 
 \noindent The proof for $\xm{M}{a}$ is similar. 
 
\end{proof}

\section{$\xpm{M}{a}$ and surgery}
\label{sec:surgery}
In this section, we first establish the following theorem, whose proof is a consequence of the results in \cite{hermann.humbert:16} and \cite{ammann.dahl.humbert:13}
\begin{theorem} \label{main_surgery} 
 Let $M$ be a compact manifold of dimension $n\geq 3$ and $M^\sharp$ be obtained from $M$ by a surgery of dimension $k \leq n-3$. Then, for all $a \in [0,\sigma(M)]$, 
one has 
$$\xp{M}{a} \leq \xp{M^\sharp}{\min(a,\La_{n,k})}   \;  \hbox{ and } \;  \xm{M}{a} \geq \xm{M^\sharp}{\min(a,\La_{n,k})}.$$
where 
$\La_{n,0}= + \infty$ and where $\La_{n,k}>0$ depends only on $n$ and $k$. 
 \end{theorem}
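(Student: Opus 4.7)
The overall approach is to construct, starting from any $(g,p) \in \Om^{a-\ep}_M$, a family $(g^\sharp_\theta)_{\theta>0}$ of metrics on $M^\sharp$ such that $(g^\sharp_\theta, p) \in \Om^{\min(a-\ep,\La_{n,k})-o(1)}_{M^\sharp}$ and $m(g^\sharp_\theta, p) \to m(g,p)$ as $\theta \to 0$. Once this is established, both displayed inequalities follow immediately by feeding these metrics into Definition \ref{def:mass_functions} and passing to the appropriate $\limsup$ or $\liminf$ in $\ep$ and $\theta$. The family $(g^\sharp_\theta)$ is provided by the codimension-$\geq 3$ surgery construction of \cite{ammann.dahl.humbert:13}: we choose the surgery locus $S \subset M$ inside $M \setminus B^g_p(1)$, so each $g^\sharp_\theta$ coincides with $g$ on $B^g_p(1)$ (in particular, $B^{g^\sharp_\theta}_p(1)$ is still isometric to $\mathbb{B}$), and the main surgery theorem of that paper delivers $Y(g^\sharp_\theta) > \min(Y(g), \La_{n,k}) - o(1)$.

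To obtain the asymptotic equality of masses we invoke the variational characterization \eqref{vcmass}. Since $\eta$ and hence $F_\eta$ are supported in $B^g_p(1)$, a region left unchanged by the surgery, the first two terms of $J^{g^\sharp_\theta}_p$ coincide with those of $J^g_p$ as soon as the test function is unaltered on that ball; only the Dirichlet term $\int u L_{g^\sharp_\theta} u \, dv^{g^\sharp_\theta}$ feels the surgery, and only on the shrinking neck. For the inequality $\liminf_\theta m(g^\sharp_\theta, p) \geq m(g, p)$ I would take the smooth minimizer $\beta$ of $J^g_p$ supplied by \cite{hermann.humbert:16}, cut it off in an annulus just outside $S$ and extend by a constant (or by zero) across the attached handle, producing $\beta_\theta \in C^\infty(M^\sharp)$. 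Using the controlled scalar curvature and the shrinking volume of the ADH neck together with the uniform $C^0$-bound on $\beta$, I expect to check $J^{g^\sharp_\theta}_p(\beta_\theta) = J^g_p(\beta) + o(1) = -m(g,p) + o(1)$.

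The reverse inequality $\limsup_\theta m(g^\sharp_\theta, p) \leq m(g, p)$ is the main technical obstacle, since it requires a \emph{lower} bound on $\inf_u J^{g^\sharp_\theta}_p(u)$. The plan is to pick a near-minimizer $u_\theta \in C^\infty(M^\sharp)$ of $J^{g^\sharp_\theta}_p$, which solves the Euler-Lagrange equation $L_{g^\sharp_\theta} u_\theta = -F_\eta$, and to transplant it to a test function $\tilde u_\theta \in C^\infty(M)$ with $J^g_p(\tilde u_\theta) \leq J^{g^\sharp_\theta}_p(u_\theta) + o(1)$. The coercivity estimate \eqref{Jgeq} in the proof of Proposition \ref{basic}(5) yields a uniform $L^N$-bound on $u_\theta$ depending only on the uniform positive lower bound on $Y(g^\sharp_\theta)$; since $F_\eta$ vanishes on the neck, $u_\theta$ is $L_{g^\sharp_\theta}$-harmonic there, so standard elliptic estimates on the ADH neck geometry upgrade this to pointwise control sufficient for cutting $u_\theta$ off across $S$ with $o(1)$ loss in the energy $\int u L_{g^\sharp_\theta} u \, dv^{g^\sharp_\theta}$. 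This cutoff construction is precisely the type of test-function estimate carried out in the ADH surgery paper, adapted here to the functional $J^g_p$ in place of the Yamabe functional; verifying that the transplantation can be executed uniformly in $\theta$ with negligible neck contribution is where the real work will lie.
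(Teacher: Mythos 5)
Your outline follows exactly the same skeleton as the paper's proof: take the Ammann--Dahl--Humbert (ADH) surgery metrics, arranged to agree with $g$ on $B^g_p(1)$ so that the new metrics lie in $\Om^{\cdot}_{M^\sharp}$, invoke the Yamabe-constant estimate $\lim Y(g_k)\geq\min(\La_{n,k},Y(g))$ from \cite{ammann.dahl.humbert:13}, and combine it with the mass convergence $m(g_k,p)\to m(g,p)$. Where you diverge is only in how that last ingredient is obtained: the paper's proof consists of a single citation, because $\lim_k m(g_k,p)=m(g,p)$ for precisely these ADH metrics is already established in \cite{hermann.humbert:16} (this is, in fact, the entire point of that earlier paper), whereas you were unaware of that and instead sketched a fresh proof via the variational characterization \eqref{vcmass}. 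The structure you propose --- both directions controlled by transplanting test functions for $J^{\cdot}_p$ across the shrinking neck, with the lower bound on $\inf J^{g^\sharp_\theta}_p$ as the genuinely hard step --- is indeed the shape of the argument in \cite{hermann.humbert:16}, so in substance you were reconstructing that cited lemma. As you acknowledge, you did not close the harder direction, so as a self-contained proof the proposal has a real gap there; but once one knows the mass convergence is available as a black box, the remainder of your argument (restricting the surgery locus to $M\setminus B^g_p(1)$, observing that the first two terms of $J^{\cdot}_p$ are unaffected because $\eta$ and $F_\eta$ are supported in the unaltered ball, and passing to $\limsup$/$\liminf$ in Definition~\ref{def:mass_functions}) matches the paper's reasoning and is correct.
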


\noindent A consequence of Theorem \ref{main_surgery} is 
\begin{corollary} \label{cor_surg}
 Let $M_0$ be any compact non spin (resp. spin) simply connected manifold of dimension $n \geq 5$ such that $\si(M_0)>0$ and let $a>0$. Then, for all compact (resp. compact spin) manifolds $M$ of the same dimension one has 
 \begin{align*}
   0 & \geq X^M_-(\min(a,\La_n)) \geq X^{M_0}_-(\min(a,\La_n))\;\hbox{ and } \\
   & \;\;\;\;\; X^M_+(\min(a,\La_n))  \leq X^{M_0}_+(\min(a,\La_n))
 \end{align*}
 where $\La_n = \min_{1 \leq k \leq n-3} \La_{n,k}>0$. 
\end{corollary}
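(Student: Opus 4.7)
The plan is to reduce Corollary~\ref{cor_surg} to an iterated application of Theorem~\ref{main_surgery} along a sequence of surgeries linking $M$ to $M_0$, each of dimension $k$ with $1 \leq k \leq n-3$. Suppose such a sequence $M = M^{(0)}, M^{(1)}, \ldots, M^{(r)} = M_0$ has been produced, with surgery dimensions $k_1, \ldots, k_r$. Set $a' := \min(a, \Lambda_n)$. Since $\Lambda_n \leq \Lambda_{n,k_i}$ for every $i$, we have $\min(a', \Lambda_{n,k_i}) = a'$, so Theorem~\ref{main_surgery} gives $X^{M^{(i-1)}}_+(a') \leq X^{M^{(i)}}_+(a')$ at each step, yielding $X^M_+(a') \leq X^{M_0}_+(a')$. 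The chain for $X^-$ is analogous with reversed inequalities, and the remaining bound $X^M_-(a') \leq 0$ is immediate from Proposition~\ref{basic}(4).

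It thus suffices to construct such a surgery sequence between the arbitrary (resp. spin) manifold $M$ and the simply connected reference $M_0$. First, one performs finitely many surgeries of dimension $1$ (in range since $n\geq 5$) along generators of $\pi_1(M)$ to reduce to the case where $M$ is simply connected. Next, if $M$ and $M_0$ do not lie in the same oriented (resp. spin) bordism class, one corrects the class by taking a connected sum with a closed simply connected representative of the missing class; a connected sum is a surgery of dimension $0$, and although $k=0$ is outside the range $1\leq k\leq n-3$, this causes no harm because $\Lambda_{n,0}=+\infty$, so the inequality propagates in Theorem~\ref{main_surgery} at the value $a'$ exactly as before. Finally, with $M$ and $M_0$ simply connected and bordant of dimension $n\geq 5$, one invokes the classical Gromov--Lawson/Stolz surgery principle (cf. the approach in \cite{ammann.dahl.humbert:13}): they can be joined by a finite sequence of surgeries with dimensions in $\{2,\ldots,n-3\}$, obtained from a handle decomposition of a bordism between them and Smale's handle trading.

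The main obstacle is this last step---securing a surgery sequence whose dimensions all lie in the admissible set $\{0,1,\ldots,n-3\}$, and especially pushing through the spin case where Stolz's analysis of $\Omega^{\mathrm{Spin}}_{n}$ modulo PSC representatives is needed. This is a purely topological/bordism input, independent of the analytic content of the paper. Once it is in hand, the statement of Corollary~\ref{cor_surg} follows by mechanically concatenating the inequalities supplied by Theorem~\ref{main_surgery}, together with the sign information furnished by Proposition~\ref{basic}(4).
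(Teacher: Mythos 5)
Your reduction to iterated applications of Theorem~\ref{main_surgery} is the right framework, and the observation that $\Lambda_n\leq\Lambda_{n,k_i}$ lets the parameter $a'=\min(a,\Lambda_n)$ pass through each step unchanged is correct, as is the appeal to Proposition~\ref{basic}(4) for the sign. However, there is a genuine gap in the bordism-class-correction step. After the dimension-$1$ surgeries killing $\pi_1(M)$ produce a simply connected $M'$ with $[M']=[M]$, you propose the step $M'\rightsquigarrow M'\sharp W$ and invoke Theorem~\ref{main_surgery} with $k=0$. But $M'\sharp W$ is \emph{not} obtained from $M'$ by a surgery: surgery preserves the (spin) bordism class, whereas $[M'\sharp W]=[M']+[W]\neq[M']$ as soon as $[W]\neq 0$. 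The connected sum is a dimension-$0$ surgery on the disjoint union $M'\amalg W$, so Theorem~\ref{main_surgery} only gives $X^{M'\amalg W}_+(a')\leq X^{M'\sharp W}_+(a')$. To relate $X^{M'}_+$ to $X^{M'\amalg W}_+$ you must use Proposition~\ref{basic}(7), and that requires knowing $\sigma(W)>0$ and, in fact, $\sigma(W)\geq a'$ so that the quantities are defined. You do not address this, and it is not automatic: simply connected representatives of the class $[M_0]-[M]$ exist and carry positive scalar curvature metrics, but a lower bound $\sigma(W)\geq a'$ is an additional claim.

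The paper's proof is built precisely to avoid this obstruction. Instead of fabricating an auxiliary $W$, it uses the identity $X^M_+(a)=X^{M\amalg(-M)}_+(a)$ (Proposition~\ref{basic}(7), applicable since $\sigma(-M)=\sigma(M)$) followed by the connected sum $M\amalg(-M)\rightsquigarrow M\sharp(-M)$, and observes that $M\sharp(-M)$ and $M_0\sharp(-M_0)$ are both null-bordant and hence bordant to each other; the Gromov--Lawson lemma (whose hypotheses only concern the \emph{target}, so your preliminary step of killing $\pi_1(M)$ is unnecessary) then supplies the surgery chain with $k\leq n-3$. A second, analogous use of the disjoint-union trick, via $X^{M_0\sharp(-M_0)}_+(a')=X^{M_0\sharp(-M_0)\amalg M_0}_+(a')\leq X^{M_0\sharp(-M_0)\sharp M_0}_+(a')\leq X^{M_0}_+(a')$, is then needed to come back down from $M_0\sharp(-M_0)$ to $M_0$; your plan omits this descent as well. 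In short, the missing ingredient in your proposal is the systematic use of $\amalg$ and $\sharp$ with $(-M)$ and $(-M_0)$ to keep all intermediate manifolds null-bordant, which removes any need to control the Yamabe invariant of a bordism-correcting manifold $W$.
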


\noindent This has the following obvious consequence:
\begin{corollary} \label{cor_surg2}
 Let $M_0, M_1$ be two compact non spin (resp. spin) simply connected manifolds of dimension $n \geq 5$ such that $\si(M_0), \sigma(M_1)>0$ and let $a\in(0,\Lambda_n)$. 
 Then we have 
 $$X^{M_0}_{\pm} (a) = X^{M_1}_{\pm} (a).$$
\end{corollary}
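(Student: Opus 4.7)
The plan is to derive the claimed equality by applying Corollary~\ref{cor_surg} twice, with the roles of $M_0$ and $M_1$ interchanged. Observe first that for $a\in(0,\Lambda_n)$ we have $\min(a,\Lambda_n)=a$, so all inequalities coming from Corollary~\ref{cor_surg} can be read directly at the value $a$ with no ambiguity.

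For the first application, take $M_0$ as the reference simply connected non-spin (resp.\ spin) manifold and set $M=M_1$; since $M_1$ is compact (resp.\ compact spin) of the same dimension, Corollary~\ref{cor_surg} yields
$$X^{M_1}_-(a)\;\geq\;X^{M_0}_-(a)\qquad\text{and}\qquad X^{M_1}_+(a)\;\leq\;X^{M_0}_+(a).$$
For the second application, the hypotheses on $M_0$ and $M_1$ are symmetric: both are compact simply connected non-spin (resp.\ spin) manifolds of dimension $n\geq 5$ with positive Yamabe invariant. Thus we may swap them, taking $M_1$ as the reference manifold and $M=M_0$, to obtain
$$X^{M_0}_-(a)\;\geq\;X^{M_1}_-(a)\qquad\text{and}\qquad X^{M_0}_+(a)\;\leq\;X^{M_1}_+(a).$$

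Combining the two pairs of opposite inequalities gives $X^{M_0}_\pm(a)=X^{M_1}_\pm(a)$, which is the desired conclusion. Since the non-trivial content has already been absorbed into Theorem~\ref{main_surgery} and Corollary~\ref{cor_surg}, no further estimates are required; the only point to be careful about is to verify that both $M_0$ and $M_1$ qualify as the ``reference'' manifold in the hypotheses of Corollary~\ref{cor_surg}, which is precisely what the symmetry of the assumption (both simply connected, same dimension, same spin/non-spin type, both with positive Yamabe invariant) guarantees. There is no substantive obstacle beyond recording this symmetry.
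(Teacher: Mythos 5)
Your proof is correct and is exactly the argument the paper has in mind: the paper labels Corollary~\ref{cor_surg2} an ``obvious consequence'' of Corollary~\ref{cor_surg} and gives no explicit proof, and what is intended is precisely the double application of Corollary~\ref{cor_surg} with the roles of $M_0$ and $M_1$ swapped, using $\min(a,\Lambda_n)=a$ for $a\in(0,\Lambda_n)$. Your observation that the symmetry of the hypotheses lets both manifolds serve as the reference manifold is the key point, and it is handled correctly.
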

\begin{remark}
 By Corollary C in \cite{gromov.lawson:80}, if $M$ is a compact simply connected non-spin manifold of dimension at least $5$ then $\sigma(M)>0$. 
 By \cite{ammann.dahl.humbert:13}, when $M$ is simply connected and $\si(M)>0$, it holds  that
 \begin{equation*}
   \si(M) \geq \min\{\La_n,\sigma(W_1),...,\sigma(W_k)\}
 \end{equation*}
 where $W_1,...,W_k$ are generators of the oriented cobordism group in dimension $n$.
\end{remark}
\begin{remark}
  This corollary allows to recover a result in \cite{hermann.humbert:16}: if $M_0$ not spin, simply connected of dimension $n \geq 5$ satisfies 
  PMT, then all the manifolds of the same dimension satisfy PMT. Indeed, assume that $M_0$ satisfies PMT then $X^{M_0}_-(0) = 0$ (see Proposition \ref{basic}) and hence $X^M_-(0)=0$ which implies PMT. 
  Note that Lohkamp \cite{lohkamp:06} and Schoen and Yau \cite{schoen.yau:17} recently announced a complete proof of the Positive Mass Theorem (i.e. all manifolds satisfy PMT).  

\end{remark}

\noindent Another  consequence is the following: 
\begin{corollary} \label{xp>0}
Assume that $M$ is simply connected, that $\si(M) >0$ and that $a < \La_n$. Then, $\xp{M}{a}>0$.
\end{corollary}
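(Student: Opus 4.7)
My plan is to combine Corollary~\ref{cor_surg2} with the Positive Mass Theorem applied to a carefully chosen reference manifold. Since $M$ is simply connected with $\sigma(M) > 0$ and $a \in (0, \La_n)$, Corollary~\ref{cor_surg2} gives $\xp{M}{a} = \xp{M_0}{a}$ for any simply connected $M_0$ of the same dimension and spin type with $\sigma(M_0) > 0$. It therefore suffices to exhibit one such reference $M_0$ together with a single pair in $\Om^a_{M_0}$ having strictly positive mass.

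In the spin case I would take $M_0 = S^n$; in the non-spin case I would take any simply connected non-spin manifold of dimension $n \geq 5$, which exists and automatically satisfies $\sigma(M_0) > 0$ by the Gromov--Lawson fact recalled in the preceding remark. To produce $g \in \Om^a_{M_0}$ I would run the cut-off construction from the proof of Proposition~\ref{basic}(1), starting from a metric with Yamabe constant strictly larger than $a$ (possible since $a < \La_n \leq \sigma(M_0)$), choosing the initial metric to carry nontrivial Weyl curvature far away from the chosen point $p$ so that the eventual $g$ is not globally conformally flat, and finally rescaling by a large constant to arrange that the flat ball has radius exactly~$1$. Both $Y(g)$ and the sign of $m(g,p)$ are invariant under such a rescaling, by the transformation law $m(bg,p) = b^{1-n/2} m(g,p)$.

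The strict positivity of the mass then comes from PMT with rigidity. The punctured manifold $(M_0 \setminus \{p\}, G_g^{4/(n-2)} g)$ is asymptotically flat and scalar-flat, with ADM mass equal to $m(g,p)$. PMT gives $m(g,p) \geq 0$, and the rigidity case $m(g,p) = 0$ would force this asymptotically flat manifold to be isometric to Euclidean space, equivalently $(M_0, g)$ to be globally conformally flat and hence, by Kuiper's uniformization of simply connected conformally flat manifolds, conformal to the round sphere. In the non-spin case this is ruled out on topological grounds, since $M_0$ is not diffeomorphic to $S^n$; in the spin case it is ruled out by our non-conformally-flat choice of $g$. Hence $m(g,p) > 0$, so $\xp{M_0}{a} \geq m(g,p) > 0$, and the reduction gives $\xp{M}{a} > 0$.

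The main obstacle I anticipate is citing PMT with rigidity on the chosen reference: for $M_0 = S^n$ this reduces to Witten's proof together with the classical rigidity argument, while for a non-spin $M_0$ one has to appeal either to the low-dimensional Schoen--Yau results (for $n \leq 7$) or to the complete proofs of Lohkamp and Schoen--Yau cited in the paper. A minor verification I would also do is that the initial metric in the cut-off construction can indeed be chosen non-conformally-flat near a region away from $p$ without dropping its Yamabe constant below~$a$, which follows from the continuity property used in the proof of Proposition~\ref{basic}(1).
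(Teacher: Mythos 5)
Your argument is the natural one and is almost certainly what the authors intend: the paper presents Corollary~\ref{xp>0} without further proof, simply as a consequence of the surgery results, and the obvious route is exactly the one you take --- reduce via Corollary~\ref{cor_surg2} to a reference $M_0$ and then exhibit a single pair $(g,p)\in\Om^a_{M_0}$ with $m(g,p)>0$ by the positive mass theorem with rigidity. The rescaling, cut-off and rigidity steps you describe are all correct. Two small remarks. In the spin case it is cleaner to take $M_0$ to be a simply connected spin manifold not diffeomorphic to $S^n$ (for instance $S^2\times S^{n-2}$, which is spin, simply connected, and has $\sigma>0$); then the rigidity alternative $(M_0,g)$ conformal to the round sphere is excluded for purely topological reasons, exactly as in your non-spin case, and you avoid having to arrange non-vanishing Weyl curvature and then check it survives the cut-off. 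Second, Corollaries~\ref{cor_surg} and~\ref{cor_surg2} are stated for $n\ge 5$, so the reduction step requires $n\ge 5$; Corollary~\ref{xp>0} as printed implicitly carries this hypothesis, and for $n=3,4$ one simply applies the Schoen--Yau positive mass theorem with rigidity directly to $M$ (no surgery needed). Your observation that the non-spin case in dimensions $n\ge 8$ ultimately rests on the announced Lohkamp and Schoen--Yau proofs is accurate, and it is a dependence shared by the paper's statement of the corollary.
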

\begin{remark}
Again, if the proof of the Positive Mass Theorem by Lohkamp in \cite{lohkamp:06} or by  Schoen and Yau announced in  \cite{schoen.yau:17} is confirmed then, for all $M$ and all $a < \sigma(M)$,   $\xp{M}{a}>0$.
\end{remark}


\subsection{Proof of Theorem \ref{main_surgery}}
Let $g \in \Om^a_M$. In \cite{hermann.humbert:16}, we constructed a sequence of metrics $g_k$ on $M^\sharp$ such that $\lim_k m(g_k,p)=m(g,p)$. In the  construction, the metric $g_k$ can be made isometric to $g$ in $B_p^g(1)$ (as soon as $B_p^g(1)$ is topologically trivial).   Moreover, we used exactly the same metrics as in the main result of \cite{ammann.dahl.humbert:13} where it was proved that 
$$\lim_k Y(g_k) \geq \min(\La_{n,k},Y(g))$$
where $\La_{n,0}= + \infty$ and $\La_{n,k} >0$ depends only on $n$ and $k$. This proves that for all $\ep >0$ we have $g_k \in \Om^{\min(a,\La_{n,k})-\ep}_{M^\sharp}$ as soon as $k$ is large enough. 
Theorem \ref{main_surgery} easily follows.

\subsection{Proof of Corollary \ref{cor_surg}}
\noindent {$(1)$} Let $M_0$ be a compact non-spin (resp. spin) simply connected manifold with $\si(M_0)>0$ and $M$ any compact (resp. compact spin) manifold of the same dimension. 
By Proposition \ref{basic}, 
$$X^{M \amalg (-M)}_+(a) = X^M_+(a),$$
where $(-M)$ is $M$ equipped with the opposite orientation.
Theorem \ref{main_surgery} then shows that 
\begin{eqnarray} \label{ineq1}
 X^M_+(a) =  X^{M \amalg (-M)}_+(a)  \leq X^{M \sharp (-M)}_+ (\min(a,\La_{n,k}))
\end{eqnarray}
where  $\sharp$ denotes the connected sum. 
Here, we used that the connected sum is a surgery of dimension $0$. \\

\noindent {$(2)$} The manifolds $M \sharp (-M)$ and $M_0 \sharp (-M_0)$ are oriented (resp. spin) cobordant since they are both oriented (resp. spin) cobordant to $S^n$. 
Since $M_0 \sharp (-M_0)$ is simply connected and not spin (resp. spin), it is obtained from $M \sharp (-M)$ by a finite sequence of surgeries of dimension $k \leq n -3$ (see the proofs of Theorem B and Theorem C in the article \cite{gromov.lawson:80} by Gromov-Lawson). 
Theorem \ref{main_surgery} then implies that 
\begin{eqnarray} \label{ineq2}
  X^{M \sharp (-M)}_+(\min(a,\La_{n,k}))  \leq X^{M_0 \sharp (-M_0)}_+(\min(a,\La_{n,k})).
\end{eqnarray}

\noindent Inequality (\ref{ineq1}) remains true when $M$ is replaced by $M_0$. As a consequence, we get from Proposition \ref{basic} that 
$$ X^{M_0 \sharp (-M_0)}_+(a)  = \max (X^{M_0 \sharp (-M_0)}_+ (a) , X^{M_0}_+(a))= X^{M_0 \sharp (-M_0) \amalg M_0}_+(a).$$
Using Theorem \ref{main_surgery}, we obtain 
$$\xp{M_0 \sharp (-M_0)}{\min(a,\La_{n,k})} \leq \xp{M_0 \sharp (-M_0) \sharp M_0}{\min(a,\La_{n,k})}$$
Now, $M_0 \sharp (-M_0) \sharp M_0$ is oriented (resp. spin) cobordant to $M_0$ and $M_0$ is simply connected and not spin (resp. spin):  by the same argument as above, $M_0$ is obtained from $M$ by a finite sequence of surgeries of dimension $k \leq n-3$. This proves that 
$$\xp{M_0 \sharp (-M_0)}{\min(a,\La_{n,k})} \leq \xp{M_0 \sharp (-M_0) \sharp M_0}{\min(a,\La_{n,k})} \leq \xp{M_0}{\min(a,\La_{n,k})}.$$
Together with Inequalities (\ref{ineq1}) and (\ref{ineq2}), we obtain the desired inequality
 $$\xp{M}{\min(a,\La_{n,k})} \leq \xp{M_0}{\min(a,\La_{n,k})}.$$ 
 
\noindent {$(3)$} The argument for $\xm{M}{a}$ is similar.

\section{Application to the Yamabe invariant}

\begin{theorem} \label{thm:xp_sigma} $\;$ \\

1. For any compact manifold $M$ with $\sigma(M)>0$, one has $\xp{M}{0}= + \infty$.\\

2. For every $n\geq 3$ there exists a constant $d_n>0$ such that for all compact manifolds $M$ of dimension $n$ with $\sigma(M)>0$ and for all $a\in(0,\sigma(M))$ we have
  \begin{equation*}
    \xp{M}{a} \leq d_n\,\frac{(\sigma(S^n)-a)^{1/n}}{a}.
  \end{equation*}
\end{theorem}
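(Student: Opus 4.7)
The plan splits along the two parts: Part~1 is a degeneration argument combined with the variational characterization~\eqref{vcmass}, and Part~2 couples the bound from Proposition~\ref{basic}(5) with a Schoen-type test function estimate for the Yamabe quotient.

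\textbf{Part 1.} Starting from any $g_0\in\Omega^0_M$ (which exists by Proposition~\ref{basic}(1)), construct a sequence $g_k$ coinciding with $g_0$ on $B^{g_0}_p(1)$ (so that $(g_k,p)\in\Omega^0_M$ for all $k$) and such that $\lambda_k:=\lambda_1(L_{g_k})\to 0^+$ (equivalently $Y(g_k)\to 0^+$). Pinching a separating hypersurface in $M\setminus B^{g_0}_p(1)$ is a natural way to achieve this. Let $\phi_k$ be the positive first eigenfunction of $L_{g_k}$, normalized by $\|\phi_k\|_{L^2}=1$. Evaluating the functional in~\eqref{vcmass} at $t\phi_k$ yields
\[
  J^{g_k}_p(t\phi_k)=C_0+2tA_k+t^2\lambda_k,\qquad A_k:=\int_M\phi_k\,F_\eta\,dv^{g_k},
\]
where $C_0$ depends only on $n$. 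Minimizing in $t$ and using~\eqref{vcmass} gives $m(g_k,p)\geq A_k^2/\lambda_k-C_0$. To conclude, arrange the pinching so that $\phi_k$ concentrates on the side of the neck containing $p$ (for instance by ensuring the smallest first Dirichlet eigenvalue in the limit lies on that side). Then $A_k$ stays bounded below by a positive constant while $\lambda_k\to 0^+$, forcing $m(g_k,p)\to+\infty$, whence $X^M_+(0)=+\infty$.

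\textbf{Part 2.} Fix $(g,p)\in\Omega^a_M$ with $a>0$. The proof of Proposition~\ref{basic}(5) already gives
\[
  m(g,p)\leq\frac{C_n}{Y(g)}\leq\frac{C_n}{a}.
\]
To bring out the extra factor $(\sigma(S^n)-a)^{1/n}$, I would plug an Aubin--Talenti bubble $u_{0,\epsilon}(x)=(\epsilon/(\epsilon^2+|x|^2))^{(n-2)/2}$ centered at $p$, cut off to the flat unit ball and corrected by the Green function of $L_g$ in the standard way (as in~\cite{schoen:84}), into the Yamabe quotient of $(M,g)$. The expected expansion is
\[
  \frac{\int v_\epsilon L_gv_\epsilon\,dv^g}{\|v_\epsilon\|_N^2}=\sigma(S^n)-\kappa_n\,m(g,p)\,\epsilon^{n-2}+O(\epsilon^{n-1}),
\]
hence $\sigma(S^n)-Y(g)\geq \kappa_n m(g,p)\,\epsilon^{n-2}-O(\epsilon^{n-1})$ for every admissible $\epsilon$. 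Choosing $\epsilon$ so that $\epsilon^{n-2}$ is of order $Y(g)^{n-1}m(g,p)^{n-1}$ couples this with the bound $m\leq C_n/Y(g)$ into an inequality of the shape $(Y(g)\,m(g,p))^n\leq c_n\,(\sigma(S^n)-Y(g))$. Since a direct computation shows that $Y\mapsto(\sigma(S^n)-Y)^{1/n}/Y$ is decreasing on $(0,\sigma(S^n))$, this implies $m(g,p)\leq d_n(\sigma(S^n)-Y(g))^{1/n}/Y(g)\leq d_n(\sigma(S^n)-a)^{1/n}/a$, and the $\limsup$ in the definition of $X^M_+(a)$ gives the theorem.

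\textbf{Main obstacle.} The principal technical difficulty is in Part~2: one must carry out Schoen's expansion with dimension-only constants uniformly over the whole class $(g,p)\in\Omega^a_M$, control the remainder $O(\epsilon^{n-1})$ and the normalization $\|v_\epsilon\|_N^2$ sharply enough for the coupling with the Sobolev bound to produce exactly the $1/n$ exponent of $\sigma(S^n)-a$ together with the $1/a$ factor, and verify that the chosen $\epsilon=\epsilon(Y(g),m(g,p))$ remains in the admissible range. A secondary point in Part~1 is to guarantee the concentration of $\phi_k$ near $p$, which must be built into the pinching construction rather than read off from generic degenerations.
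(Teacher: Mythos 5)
Your high-level strategy — use the variational characterization \eqref{vcmass} for Part~1 and a bubble test function for Part~2, then couple with the bound $m\lesssim 1/Y$ — points in the right direction, but both parts have genuine gaps, and in Part~2 the central mechanism is different from the one the paper actually uses.

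\textbf{Part 1.} The inequality $m(g_k,p)\geq A_k^2/\lambda_k-C_0$ is correct, but the step ``arrange the pinching so that $A_k$ stays bounded below'' is exactly the unproved content. Recall that $F_\eta=\Delta_g(\eta r^{2-n})$ is supported in the annulus $B_p(1)\setminus B_p(\tfrac12)$ and changes sign, so $A_k=\int\phi_kF_\eta\,dv^{g_k}$ is a signed integral against an $L^2$-normalized function that, under a degeneration, need not remain comparable to a constant on that annulus; ``concentration near $p$'' does not by itself give a lower bound on $|A_k|$. The paper's actual argument is different: it produces a $C^2$-convergent family $g_t=tg+(1-t)h$ that stays flat on $B_p(1)$, with $Y(g_{t_\infty})=0$, and then cites the results of \cite{hermann.humbert:16} and \cite{beig.omurchada:97} for the blow-up $m(g_m,p)\to+\infty$; that cited result handles the issue you leave open.

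\textbf{Part 2.} The obstacle you flag at the end is the whole difficulty, and the sketch does not overcome it. A Schoen-type expansion $Q(v_\ep)=\sigma(S^n)-\kappa_n m(g,p)\ep^{n-2}+O(\ep^{n-1})$ cannot hold with a dimension-only constant in the error term uniformly over $\Omega^a_M$: the error depends on the regular part of the Green function, whose size is \emph{a priori} uncontrolled over that class (and does blow up as $a\to 0^+$). The paper avoids any asymptotic expansion. It uses a piecewise test function $\psi_\ep$ (bubble on $B_p(\rho_\ep)$, truncated Green function on $M\setminus B_p(2\rho_\ep)$, interpolation in the annulus) with bubble radius chosen to \emph{depend on the mass}, $\rho_\ep^{n-2}=\delta_n|A_\ep|/(B_n+|A_\ep|)^2$. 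The key ingredient making the estimate uniform is Step~1: the variational characterization gives an $L^N$ bound $X_\ep\leq D_n/(a-\ep)$ on the minimizer $\beta_\ep=A_\ep+\alpha_\ep$, hence an $L^\infty$ bound via the mean value property, and then $\int|d(h_\ep\alpha_\ep)|^2\leq\tfrac{(n-2)\omega_{n-1}}{4}|A_\ep|$ on the transition error. These quantitative controls, not an expansion in $\ep$, yield the final inequality $a\leq\sigma(S^n)-G_n^n\big(1+B_n/\xp{M}{a}\big)^{-n}$, from which the stated bound follows by elementary case analysis. As a minor point, your choice $\ep^{n-2}\sim Y^{n-1}m^{n-1}$ makes the leading term $\sim Y^{n-1}m^n$; that does not obviously reorganize into $(Ym)^n\lesssim\sigma(S^n)-Y$, and the exponent bookkeeping in the claimed ``inequality of the shape'' does not close as written.
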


\begin{cor}\label{cor:xp_sigma}
  Let $d_n$ be the constant in part 1 of Theorem \ref{thm:xp_sigma} and suppose that $M$ is a compact manifold of dimension $n$ with $\sigma(M)>0$ such that
  \begin{equation*}
    \frac{d_n}{\sigma(S^n)}
    < \limsup_{\ep\to 0} \frac{\xp{M}{\sigma(M)-\ep}}{\ep^{1/n}}.
  \end{equation*}
  Then we have $\sigma(M)<\sigma(S^n)$.
\end{cor}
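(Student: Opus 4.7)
The plan is to argue by contradiction using Aubin's classical bound $\sigma(M)\leq \sigma(S^n)$, which holds for every closed manifold $M$ of dimension $n\geq 3$. Suppose for contradiction that the conclusion fails, so that in fact $\sigma(M)=\sigma(S^n)$. Then $(0,\sigma(M))=(0,\sigma(S^n))$, and for every sufficiently small $\ep>0$ the value $a_\ep\definedas\sigma(M)-\ep$ lies in the range where part~2 of Theorem~\ref{thm:xp_sigma} applies.

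Apply the estimate of part~2 with $a=a_\ep$: since $\sigma(S^n)-a_\ep=\ep$, we obtain
\begin{equation*}
  \xp{M}{\sigma(M)-\ep}
  \;\leq\; d_n\,\frac{(\sigma(S^n)-a_\ep)^{1/n}}{a_\ep}
  \;=\; d_n\,\frac{\ep^{1/n}}{\sigma(S^n)-\ep}.
\end{equation*}
Dividing by $\ep^{1/n}$ and taking $\limsup_{\ep\to 0}$ yields
\begin{equation*}
  \limsup_{\ep\to 0}\frac{\xp{M}{\sigma(M)-\ep}}{\ep^{1/n}}
  \;\leq\;\limsup_{\ep\to 0}\frac{d_n}{\sigma(S^n)-\ep}
  \;=\;\frac{d_n}{\sigma(S^n)}.
\end{equation*}
This contradicts the hypothesis $\frac{d_n}{\sigma(S^n)}<\limsup_{\ep\to 0}\xp{M}{\sigma(M)-\ep}/\ep^{1/n}$, and hence $\sigma(M)<\sigma(S^n)$.

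In this argument there is essentially no obstacle beyond a clean substitution; the whole content of the corollary is packaged into the quantitative estimate of Theorem~\ref{thm:xp_sigma}(2) together with the a priori bound $\sigma(M)\leq\sigma(S^n)$. The only point to check is that $a_\ep\in(0,\sigma(M))$ for small $\ep$, which is immediate from the assumption $\sigma(M)>0$, so that the inequality of part~2 is genuinely applicable.
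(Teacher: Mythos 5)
Your argument is correct and is essentially identical to the paper's: both assume for contradiction that $\sigma(M)=\sigma(S^n)$, substitute $a=\sigma(M)-\ep$ into the bound of Theorem~\ref{thm:xp_sigma}(2), divide by $\ep^{1/n}$, and take $\limsup_{\ep\to 0}$ to contradict the hypothesis. The only difference is presentational; you have spelled out the appeal to Aubin's bound $\sigma(M)\leq\sigma(S^n)$, which the paper leaves implicit.
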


Note that the hypothesis of Corollary \ref{cor:xp_sigma} is satisfied if $\xp{M}{\sigma(M)}>0$ since the function $a\mapsto\xp{M}{a}$ is continuous from the left. 
This fact leads to a natural question: is this possible that $X^M_+(\sigma(M)) >0$ ?  The answer is given by
\begin{proposition} \label{examples}
 It holds that
 $$  X^{\mR P^3}_+(\sigma(\mR P^3)) >0 \; \hbox{ and } \; X^{S^n}_+(\sigma(S^n)) =0.$$
 \end{proposition}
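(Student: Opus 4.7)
The two assertions are proved independently.

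For $\xp{S^n}{\sigma(S^n)} = 0$, the plan is to combine the quantitative upper bound of Theorem~\ref{thm:xp_sigma}(2) with the positive mass theorem. Specialized to $M = S^n$ the bound reads $\xp{S^n}{a} \leq d_n (\sigma(S^n) - a)^{1/n}/a$, whose right hand side tends to $0$ as $a \nearrow \sigma(S^n)$. Combined with the left continuity of $\xp{S^n}{\cdot}$ from Proposition~\ref{basic}(3), this yields $\xp{S^n}{\sigma(S^n)} \leq 0$. The opposite inequality is immediate: PMT on $S^n$ (known in this generality via Schoen--Yau or Witten) forces every $m(g,p)$ entering the defining supremum to be non-negative, so $\xp{S^n}{\sigma(S^n)} \geq 0$.

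For $\xp{\mR P^3}{\sigma(\mR P^3)} > 0$, the strategy is to exhibit a single metric $h$ on $\mR P^3$ with flat unit ball around some $p$, with $Y(h) = \sigma(\mR P^3)$, and with $m(h,p) > 0$. Once such an $h$ is in hand, $(h,p) \in \Om^{\sigma(\mR P^3) - \ep}_{\mR P^3}$ for every $\ep > 0$ and the $\limsup$ defining $\xp{\mR P^3}{\sigma(\mR P^3)}$ is bounded below by the positive constant $m(h,p)$. To produce $h$, I would invoke the theorem of Bray--Neves that the round metric $g_0$ on $\mR P^3$ realizes the Yamabe invariant, $Y(g_0) = \sigma(\mR P^3)$. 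Since $g_0$ has constant sectional curvature it is locally conformally flat; choosing conformal normal coordinates at $p$ one has $g_0 = f^4 g_{\mathrm{Euc}}$ with $f(p) = 1$ and $\na f(p) = 0$. Extending $1/f$ by a standard cut-off to a smooth positive function $v$ on $\mR P^3$ gives a conformal metric $g_1 = v^4 g_0$ that is exactly flat on a ball $B_p(\delta)$ and still satisfies $Y(g_1) = \sigma(\mR P^3)$; rescaling by $h = \delta^{-2} g_1$ then produces the desired flat unit ball with unchanged Yamabe constant.

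The main work is to show $m(h,p) > 0$. By the scaling rule $m(\la^2 g, p) = \la^{2-n} m(g,p)$ this reduces to $m(g_1, p) > 0$, which I would establish by lifting to the double cover $\pi \colon S^3 \to \mR P^3$. Writing $\ti g_1 = \pi^{*} g_1 = \ti v^4 g_{S^3}$ (with $g_{S^3}$ the round metric and $\ti v = \pi^{*} v$), one has $\pi^{*} G_{g_1} = G_{\ti g_1}(\cdot,\ti p) + G_{\ti g_1}(\cdot,-\ti p)$ because $\pi^{*}\delta_p = \delta_{\ti p} + \delta_{-\ti p}$. Reading off the constant term in the expansion near $\ti p$ yields the identity
\[
 m(g_1, p) \;=\; m(\ti g_1, \ti p) \,+\, G_{\ti g_1}(\ti p, -\ti p).
\]
Since $\ti g_1$ lies in the conformal class of the round $g_{S^3}$, which is conformally Euclidean on $S^3 \setminus \{\ti p\}$ via stereographic projection, the Green function expansion at $\ti p$ has vanishing constant term and so $m(\ti g_1,\ti p) = 0$. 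The conformal Green function identity $G_{\ti g_1}(x,y) = \ti v(x)^{-1}\ti v(y)^{-1} G_{g_{S^3}}(x,y)$, with $\ti v(\pm\ti p) = v(p) = 1$ together with the explicit round-sphere value $G_{g_{S^3}}(\ti p,-\ti p) = 1/(8\pi)$, then delivers $G_{\ti g_1}(\ti p,-\ti p) = 1/(8\pi)$. Hence $m(g_1, p) = 1/(8\pi) > 0$ and $m(h,p) = \delta/(8\pi) > 0$. Morally, the antipodal preimage contributes a rigid strictly positive term to the mass on $\mR P^3$ that is absent in the simply connected case $S^n$, which is the structural source of the dichotomy in the proposition.
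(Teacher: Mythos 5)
Your proof is correct, and the two halves differ from the paper's in different degrees. For $S^n$ you use the quantitative upper bound of Theorem~\ref{thm:xp_sigma}(2) together with left continuity from Proposition~\ref{basic}(3) to get $X^{S^n}_+(\sigma(S^n))\le 0$, and PMT on $S^n$ for $\ge 0$; the paper routes the first half through Corollary~\ref{cor:xp_sigma} (which is itself a reformulation of Theorem~\ref{thm:xp_sigma}(2)) and leaves the $\ge 0$ direction implicit, so this is essentially the same argument, with your write-up slightly more complete in spelling out why the value cannot be negative. The genuine divergence is in the $\mR P^3$ half. Both you and the paper start from the Bray--Neves result that the round metric realizes $\sigma(\mR P^3)$ and conformally flatten it near $p$ to land in $\Om^{\sigma(\mR P^3)-\ep}_{\mR P^3}$ for every $\ep$. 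At that point the paper invokes PMT for $\mR P^3$ and asserts $m(g,p)>0$; note that PMT alone only gives $m\ge 0$, so the strict inequality tacitly uses the rigidity statement ($m=0$ iff conformal to the round sphere) applied to $\mR P^3\not\cong S^3$. You instead lift to the double cover $\pi\colon S^3\to\mR P^3$, observe that $L_{\ti g_1}(\pi^*G_{g_1})=\de_{\ti p}+\de_{-\ti p}$, and deduce the identity $m(g_1,p)=m(\ti g_1,\ti p)+G_{\ti g_1}(\ti p,-\ti p)$. Since $\ti g_1$ is conformally round the first term is $0$, and the conformal covariance of the Green function together with the explicit sphere value $G_{g_{S^3}}(\ti p,-\ti p)=1/(8\pi)$ gives $m(g_1,p)=1/(8\pi)$ and hence $m(h,p)=\de/(8\pi)>0$. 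This is longer but buys an explicit positive lower bound for $X^{\mR P^3}_+(\sigma(\mR P^3))$, and it replaces PMT rigidity on $\mR P^3$ by the classical zero-mass fact for conformally round metrics on $S^3$ plus a covering-space identity, in effect re-deriving the strict positivity on $\mR P^3$ from the spherical case; the paper's route is shorter but rests on quoting the $\mR P^3$ positive mass theorem directly.
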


 \begin{proof}[Proof of Proposition \ref{examples}]
The fact that $X^{S^n}_+(\sigma(S^n)) =0$ is an immediate consequence of Corollary \ref{cor:xp_sigma}.  Beside, it was proven by Bray and Neves \cite{bray.neves:04}  that $\sigma(\mR P^3)< \sigma(S^3)$ is attained by the standard metric. Since the standard metric of $\mR P^3$ is locally conformally flat, one can choose a metric $g$ in its conformal class such that $B_p^{g}(1)$ is flat where $p \in \mR P^3$ is fixed. Then, since $\mR P^3$ satisfies $PMT$, one has
$$ X^{\mR P^3}_+ (\sigma(\mR P ^3)) \geq m(g,p) >0.$$ 
\end{proof}

\begin{proof}[Proof of Corollary \ref{cor:xp_sigma}]
Assume that $\sigma(M)=\sigma(S^n)$.
Using Theorem \ref{thm:xp_sigma} we get 
\begin{equation*}
  \frac{d_n}{\sigma(S^n)}
  < \limsup_{\ep\to 0}  \frac{d_n\,\ep^{1/n}}{(\sigma(S^n)-\ep)\ep^{1/n}}
  = \limsup_{\ep\to 0} \frac{d_n}{\sigma(S^n)-\ep}
\end{equation*}
which is a contradiction.
\end{proof}

\begin{proof}[Proof of Theorem \ref{thm:xp_sigma}]
For the first statement, let $p \in M$ and $g_m \in \Om^0_M$ be a sequence of metrics converging in $C^2$ to some metric $g_\infty$ such that $Y(g_\infty) = 0$. To see the existence of such a sequence, it suffices to construct $g_\infty$. For this, just consider any metric $(g,p) \in \Om^a_M$ for some $a>0$. It is standard that  one can modify $g$ locally outside $B_p^g(1)$ to get a metric $h$ such that $Y(h)<0$. Then, set $g_t= t g+(1-t) h$ for $t \in [0,1]$. 
 Let $$t_\infty:= \max \{ t\in[0,1] \mid Y(g_t) \leq 0 \}.$$
We can then set $g_\infty:=g_{t_\infty}$ and $g_m :=g_{t_\infty + \frac{1}{m}}$.  It was then  proven in \cite{hermann.humbert:16} and in \cite{beig.omurchada:97} that $\lim_m m(g_m,p)= + \infty$ which proves that 
  $X^M_+(0)=+\infty$.\\
  
 The second statement is much harder to prove.  
   Let $a\in(0,\sigma(M))$ and let $(g_\ep)_{\ep\in(0,a)}$ be a  sequence of Riemannian metrics on $M$ such that for all $\ep$ we have $g_\ep\in\Omega^{a-\ep}_M$ 
and 
\begin{equation*}
  A_\ep:=m(g_\ep,p)\geq \xp{M}{a}+\ep.
\end{equation*}
For every $\ep$ let $\beta_\ep$ be the smooth function on $M$ such that $-A_\ep=J^g_p(\beta_\ep)$.
We put 
\begin{equation*}
X_\ep:=\Big(\int_M |\beta_\ep|^N\,dv^{g_\ep}\Big)^{\frac{1}{N}}.
\end{equation*}
By H\"older's inequality and by the definition of $Y(g_\ep)$ we have 
\begin{align*}
  -A_\ep 
  &=
  J^{g_\ep}_p(\beta_\ep)
  = \int_{M\setminus\{p\}} \eta r^{2-n}F_{\eta}\,dv^{g_\ep}
  +2\int_M \beta F_{\eta}\, dv^{g_\ep}
  +\int_M \beta L_g \beta\,dv^{g_\ep} \\
  &\geq \int_{M\setminus\{p\}} \eta r^{2-n}F_{\eta}\,dv^{g_\ep}
  -2 \Big(\int_M |F_{\eta}|^{\frac{N}{N-1}}\, dv^{g_\ep} \Big)^{\frac{N-1}{N}} X_\ep
  +(a-\ep)
  X_\ep^2.
\end{align*}
We put 
\begin{equation*}
  C_n:= \int_{M\setminus\{p\}} \eta r^{2-n}F_{\eta}\,dv^g
  ,\quad 
  D_n:= 2 \Big(\int_M |F_{\eta}|^{\frac{N}{N-1}}\, dv^g\Big)^{\frac{N-1}{N}}.
\end{equation*}
The numbers $C_n$, $D_n$ are independent of $(M,g)$ since $g$ is flat on the supports of $\eta$ and $F_{\eta}$.
We get 
\begin{equation*}
-A_\ep \geq C_n - D_nX_\ep + (a-\ep) X_\ep^2.
\end{equation*}
We have $C_n\geq 0$ since for $M=S^n$ with the standard metric $g_{\mathrm{can}}$ we have 
\begin{equation*}
  0 = -m (g_{\mathrm{can}},p)
  \leq J^{g_{\mathrm{can}}}_p(0)
  =C_n.
\end{equation*}
We also have $C_n+A_\ep\geq 0$ for all $\ep$ since 
\begin{equation*}
  -A_\ep 
  =m(g_\ep,p)
  \leq J^{g_\ep}_p(0)
  =C_n.
\end{equation*}
The quadratic function $f(x):=A_\ep+C_n-D_nx+(a-\ep)x^2$ satisfies $f(X_\ep)\leq 0$ and attains its minimum value at $x_0=-\frac{D_n}{2(a-\ep)}$.
We get 
\begin{equation*}
  0 \geq f(x_0)
  = A_\ep + C_n - \frac{D_n^2}{4(a-\ep)}
\end{equation*}
and thus 
\begin{equation*}
  \xp{M}{a}+\ep 
  \leq A_\ep 
  \leq \frac{D_n^2}{4(a-\ep)}-C_n
  \leq \frac{D_n^2}{4(a-\ep)}.
\end{equation*}
As $\ep\to 0$ we obtain
\begin{equation}
  \label{eq:xp_leq_C/a}
  \xp{M}{a}\leq \frac{D_n^2}{4a}
\end{equation}
Moreover, since $f(X_\ep)\leq 0$, the number $X_\ep$ is less than or equal to the largest root of the equation $f(x)=0$.
Using that $A_\ep+C_n\geq 0$ we get 
\begin{equation*}
  X_\ep 
  \leq \frac{D_n+\sqrt{D_n^2-4(A_\ep+C_n)(a-\ep)}}{2(a-\ep)}
  \leq \frac{D_n}{a-\ep}.
\end{equation*}
Recall that the function $\beta_\ep$ is harmonic on $B^{g_\ep}_p(\frac{1}{2})$. 
Thus for all $x\in B^{g_\ep}_p(\frac{1}{3})$ we have 
\begin{equation*}
  \beta_\ep(x)
  =\frac{6^n}{\vol(\mathbb{B})} \int_{B^{g_\ep}_x(\frac{1}{6})} \beta_\ep (y)\, dv^{g_\ep},
\end{equation*}
where $\mathbb{B}$ denotes the Euclidean unit ball, since $B^{g_\ep}_x(\frac{1}{6})\subset B^{g_\ep}_p(\frac{1}{2})$.
Using H\"older's inequality we get for all $x\in B^{g_\ep}_p(\frac{1}{3})$:
\begin{align}
  \nonumber 
  |\beta_\ep(x)|
  &\leq \frac{6^n}{\vol(\mathbb{B})}
  \vol\Big(B^{g_\ep}_x\Big(\frac{1}{6}\Big)\Big)^{\frac{N-1}{N}}\,\Big(\int_{B^{g_\ep}_x(\frac{1}{6})}|\beta_\ep|^N\Big)^{\frac{1}{N}}\\
  \nonumber 
  &\leq 6^{\frac{n}{N}} \vol(\mathbb{B})^{-\frac{1}{N}}\, X_\ep \\
  \nonumber 
  &\leq 6^{\frac{n}{N}} \vol(\mathbb{B})^{-\frac{1}{N}}\, \frac{D_n}{a-\ep}\\
  \label{eq:beta_estimate}
  &=: \frac{B_n}{a-\ep}.
\end{align}
For every $\ep$ we define $\rho_\ep>0$ such that 
\begin{equation}
  \label{eq:rho_ep_definition}
  \rho_\ep^{n-2} = \delta_n  \frac{|A_\ep|}{(B_n+|A_\ep|)^2}
\end{equation}
where $\delta_n>0$ can be chosen such that for all $\ep$ the number $\rho_\ep$ is as small as we want since the function $x\mapsto\frac{|x|}{(B_n+|x|)^2}$ is bounded.
We choose $\delta_n$ such that  $\rho_\ep<\frac{1}{6}$ for all $\ep$. 
Then for every $\ep$ we choose $h_\ep\in C^{\infty}(M)$ such that $0\leq h_\ep\leq 1$, $h_\ep\equiv 1$ on $B^{g_\ep}_p(\rho_\ep)$, $h_\ep\equiv 0$ on $M\setminus B^{g_\ep}_p(2\rho_\ep)$ and $|dh_\ep|\leq\frac{2}{\rho_\ep}$.
Moreover, for every $\ep$ we write the Green function $G_\ep$ of $L_{g_\ep}$ as
\begin{equation*}
  G_\ep(x)
  =\eta(x)r(x)^{2-n}
  + A_\ep 
  + \alpha_\ep (x)
\end{equation*}
where $\alpha_\ep\in C^{\infty}(M)$ is harmonic on $B^{g_\ep}_p(\frac{1}{2})$ and satisfies $\alpha_\ep(p)=0$.

\begin{step} \label{step1_new}
For $\ep$ close enough to $0$ we have 
\begin{equation*}
  \int_M  |d  (h_\ep \al_\ep)|^2 dv^{g_\ep}  \leq 
  \frac{(n-2)\omega_{n-1}}{4}\,|A_\ep|.
\end{equation*}
\end{step}

Since $\alpha_\ep$ is harmonic on the support of $d h_\ep$, we can use  Identity $(3)$ in \cite{hermann.humbert:16} and  H\"older inequality
to write 
\begin{equation*}
  \int_M  |d  (h_\ep \al_\ep)|^2 dv^{g_\ep}  
  = \int_{C_\ep} |dh_\ep|^2 \al_\ep^2 dv^{g_\ep}
  \leq \left( \int_{C_\ep} |d h_\ep|^n dv_{g_\ep} \right)^{\frac{2}{n}} \left( \int_{C_\ep} |\alpha_\ep|^N dv_{g_\ep} \right)^{\frac{2}{N}}
\end{equation*}
where $C_\ep:= B_{p}^{g_\ep}(2 \rho_\ep) \setminus B_{p}^{g_\ep}(\rho_\ep)$ is the support of $dh_\ep$. 
Observe that the definition of $h_\ep$ and the fact that the volume of the support of $dh_\ep$ is bounded by $C \rho_\ep^n$ with $C$ independent of $\ep$ imply that there exists $\alpha_0>0$ which is independent of $\ep$ such that 
\begin{equation*}
  \left( \int_{C_\ep} |d h_\ep|^n dv_{g_\ep} \right)^{\frac{2}{n}} 
  \leq \al_0.
\end{equation*}
Hence, for all $\ep$ small enough, 
\begin{equation} \label{eq:ineq_alpha_0}
  \int_M  |d  (h_\ep \al_\ep)|^2 dv^{g_\ep}  
  \leq \al_0  \left( \int_{C_\ep} |\alpha_\ep|^N dv_{g_\ep} \right)^{\frac{2}{N}}.
\end{equation}
Now, with $\alpha_\ep=\beta_\ep-A_\ep$ and using the equations \eqref{eq:beta_estimate} and \eqref{eq:rho_ep_definition} we get for $\ep$ close to $0$:
\begin{align*}
 \Big(\int_{C_\ep} |\alpha_\ep|^N\,dv^{g_\ep}\Big)^{\frac{1}{N}}
 &\leq \Big(\int_{C_\ep} |\beta_\ep|^N\,dv^{g_\ep}\Big)^{\frac{1}{N}}
 + |A_\ep| \vol(C_\ep)^{\frac{1}{N}}\\
 &\leq \Big(\frac{B_n}{a-\ep}+|A_\ep|\Big)\vol(C_\ep)^{\frac{1}{N}}\\
 &=\Big(\frac{B_n}{a-\ep}+|A_\ep|\Big)(2^n-1)^{\frac{1}{N}}\rho_\ep^{\frac{n-2}{2}} \vol(\mathbb{B})^{\frac{1}{N}}\\
 &\leq \Big(\frac{B_n}{a-\ep}+|A_\ep|\Big)(2^n-1)^{\frac{1}{N}} \delta_n^{\frac{n-2}{2}} \frac{\sqrt{|A_\ep|}}{B_n+|A_\ep|} \vol(\mathbb{B})^{\frac{1}{N}}\\
 &\leq E_n\sqrt{|A_\ep|}
\end{align*}
where $E_n>0$ is independent of $\ep$.
By choosing $\delta_n$ in equation \eqref{eq:rho_ep_definition} smaller we may assume that $E_n^2\alpha_0\leq \frac{(n-2)\omega_{n-1}}{4}$.
Therefore the assertion of Step 1 follows from the equation \eqref{eq:ineq_alpha_0}.\\


\noindent 
For every $\ep$ we define
\begin{equation}
  \label{eq:beta_definition}
  \beta_\ep := \zeta_n |A_\ep|^{\frac{1}{2}} \rho_\ep^{\frac{n}{2}}
\end{equation}
where $\zeta_n>0$ will be fixed later. 
We have for all $\ep$:
\begin{equation}
  \label{eq:rho_ep_beta_ep}
  \be_\ep 
  = \zeta_n |A_\ep|^{\frac{1}{2}} \rho_\ep^{\frac{n-2}{2}} \rho_\ep
  = \zeta_n \delta_n^{\frac{1}{2}} \frac{|A_\ep|}{B_n+|A_\ep|} \rho_\ep
  \leq \zeta_n \delta_n^{\frac{1}{2}} \rho_\ep.
\end{equation}
We define $u_\ep\in C^{\infty}(M)$ by 
\begin{equation*}
  u_\ep(r) = \left( \frac{\beta_\ep}{\beta_\ep^2 + r^2} \right)^{\frac{n-2}{2}}
\end{equation*}
and $\psi_\ep \in C^{\infty}(M)$ by 
\begin{equation*}
  \psi_\ep = \left| \begin{array}{ccc}
                    u_\ep & \hbox{ if } & r \leq \rho_\ep \\
                    \ell_\ep( G_\ep - h_\ep \al_\ep) & \hbox{ if } & \rho_\ep \leq r \leq 2 \rho_\ep \\
                    \ell_\ep G_\ep & \hbox{ if } & r \geq 2 \rho_\ep 
                    \end{array}
  \right.
\end{equation*}
where $\ell_\ep= u_\ep(\rho_\ep)(\frac{1}{(n-2)\omega_{n-1}}\rho_\ep^{2-n} +A_\ep)^{-1}$ so that $\psi_\ep$ is continuous. 

\begin{step} \label{step2_new}
  Conclusion. 
\end{step}


\noindent We set  
\begin{equation*}
  E_\ep 
  = \int_M \Big( |d \psi_\ep|^2 + \frac{n-2}{4(n-1)}\scal_{g_\ep} |\psi_\ep|^2 \Big) dv^{g_\ep}
\end{equation*}
and 
\begin{equation*}
  D_\ep 
  = \left( \int_M |\psi_\ep|^N\, dv^{g_\ep} \right)^{\frac{2}{N}}
\end{equation*}
so that 
\begin{eqnarray} \label{eq:Q=}
 Q_\ep(\psi_\ep) = \frac{E_\ep}{D_\ep}.
\end{eqnarray}

\noindent We write 
\begin{equation*}
  E_\ep = E_1 + E_2
\end{equation*}
where 
\begin{equation*}
  E_1 = \int_{B_\ep} \Big( |d \psi_\ep|^2 + \frac{n-2}{4(n-1)} \scal_{g_\ep} |\psi_\ep|^2 \Big) dv^{g_\ep}
\end{equation*}
and 
\begin{equation*}
  E_2 = \int_{M \setminus B_\ep} \Big( |d \psi_\ep|^2 +  \frac{n-2}{4(n-1)} \scal_{g_\ep} |\psi_\ep|^2 \Big) dv^{g_\ep}
\end{equation*}
where $B_\ep:=B_{p}^{g_\ep}(\rho_\ep)$ which is isometric to the Euclidean ball of radius $\rho_\ep$. 
On $B_\ep$, it holds that 
\begin{equation*}
  \Delta_{g_\ep} u_\ep = n(n-2) u_\ep^{N-1}
\end{equation*}
and we get from multiplying this equation by $u_\ep$ and   integrating by parts that 
\begin{equation}
  \label{eq:ubeta}
  \int_{B_\ep} |du_\ep|^2 dv^{g_\ep} 
  -\int_{\partial B_\ep} u_\ep \frac{\partial u_\ep}{\partial r} \,da^{g_\ep}
  = n(n-2) \int_{B_\ep} |u_\ep|^N dv^{g_\ep}.
\end{equation}
One also has 
\begin{align*} 
  \sigma(S^n) & = \frac{\int_{\mR^n} |du_\ep|^2 dx}{\left( \int_{\mR^n} |u_\ep|^N dx \right)^{\frac{2}{N}}}  
  = n (n-2)  \left( \int_{\mR^n} |u_\ep|^N dx \right)^{\frac{2}{n}}
\end{align*}
which leads to 
\begin{equation*}
  \left( \int_{B_\ep} |u_\ep|^N dx \right)^{\frac{2}{n}} 
  \leq \frac{\sigma(S^n)}{n(n-2)}
\end{equation*}
Plugging this estimate into equation \eqref{eq:ubeta}, we obtain 
\begin{equation} \label{eq:E1}
  E_1 
  \leq \sigma(S^n) \left( \int_{B_\ep} |\psi_\ep|^N dv^{g_\ep} \right)^{\frac{2}{N}} 
  + \int_{\partial B_\ep} u_\ep \frac{\partial u_\ep}{\partial r}\,da^{g_\ep}.
\end{equation}
Now, we evaluate $E_2$. For this, we integrate by parts: 
\begin{equation*}
  E_2= \int_{M \setminus B_\ep} \psi_\ep L_{g_\ep}\psi_\ep\, dv_{g_\ep}  
  +E_3
\end{equation*}
where $E_3$ is a boundary term which will be computed below. Since $L_{g_\ep} G_\ep= 0 $ on $M \setminus B_\ep$, 
we obtain 
\begin{equation*}
  E_2= \ell_\ep^2 \int_{B_\ep' \setminus B_\ep} L_{g_\ep} (- h_\ep \al_\ep) (G_\ep - h_\ep \al_\ep) dv_{g_\ep}  
  + E_3
\end{equation*}
where $B':= B_{p}^{g_\ep}(2 \rho_\ep)$. 
Note that since $\al_\ep$ is harmonic and $h_\ep$ is constant on $B_\ep$, one has on $B_\ep$,
\begin{equation*}
  L_{g_\ep} (- h_\ep \al_\ep) = \Delta_{g_\ep}  (- h_\ep \al_\ep) = 0.
\end{equation*} 
Hence, by definition of the Green function $G_\ep$, one has: 
\begin{equation*}
  \int_{B_\ep' \setminus B_\ep} L_{g_\ep} (- h_\ep \al_\ep)G_\ep dv^{g_\ep} 
  = \int_M  L_{g_\ep} (- h_\ep \al_\ep)G_\ep dv^{g_\ep} 
  = -\al_\ep(p) = 0.
\end{equation*}
We also have
\begin{equation*}
  \int_{B_\ep' \setminus B_\ep}  L_{g_\ep} (h_\ep \al_\ep)(h_\ep \al_\ep)  dv^{g_\ep} 
  =  \int_M  L_{g_\ep} (h_\ep \al_\ep)(h_\ep \al_\ep) 
  =  \int_M  |d  (h_\ep \al_\ep)|^2 dv^{g_\ep}.
\end{equation*}
Since, by Step \ref{step1_new} we have
\begin{equation}
  \label{eq:gamma_definition}
  \int_M  |d (h_\ep \al_\ep)|^2 dv^{g_\ep} \leq \frac{(n-2)\omega_{n-1}}{4}|A_\ep|
  =:\gamma_\ep
\end{equation} 
we obtain:
\begin{equation*}
  E_2 \leq \ell_\ep^2 \gamma_\ep + E_3.
\end{equation*}
So let us evaluate $E_3$:
\begin{align*}
  E_3 & = - \ell_\ep^2   \int_{\partial B_\ep} (G_\ep-h_\ep \al_\ep) 
  \frac{ \partial ( G_\ep - h_\ep \al_\ep)}{ \partial r}\\ 
  & = -\ell_\ep^2 \Big(\frac{1}{(n-2)\omega_{n-1}}\rho_\ep^{2-n}+A_\ep\Big)\Big(-\frac{1}{\omega_{n-1}}\rho_\ep^{1-n}\Big)\vol(\partial B_\ep)\\
  & = \ell_\ep^2 \Big(\frac{1}{(n-2)\omega_{n-1}}\rho_\ep^{2-n} + A_\ep\Big). 
\end{align*}
Combining this with \eqref{eq:E1}
\begin{align}
  \nonumber  
  E_1 + E_2  
  &\leq \sigma(S^n) \left( \int_{B_\ep} |\psi_\ep|^N dv^{g_\ep} \right)^{\frac{2}{N}}   \\
  \label{eq:E_1+E_2}
  & + \int_{\partial B_\ep} u_\ep \frac{\partial u_\ep}{\partial r} da^{g_\ep} +  \ell_\ep^2 \Big(\frac{1}{(n-2)\omega_{n-1}}\rho_\ep^{2-n} + A_\ep\Big) +\ell_\ep^2 \gamma_\ep.
\end{align}

\noindent  It remains to compute 
\begin{equation*}
  E_4:= \int_{\partial B_\ep}  u_\ep \frac{\partial u_\ep}{\partial r} da^{g_\ep}.
\end{equation*}
Using the fact that on $\partial B_\ep$ we have
\begin{equation*}
  u_\ep= \ell_\ep \Big( \frac{1}{(n-2)\omega_{n-1}}\rho_\ep^{2-n} +A_\ep\Big)
\end{equation*} 
and that 
\begin{equation*}
  \frac{\partial u_\ep}{\partial r} = -(n-2) \frac{\rho_\ep}{\be_\ep^2+\rho_\ep^2} u_\ep
\end{equation*}
we obtain 
\begin{equation*}
  E_4 = -(n-2) \om_{n-1}  \frac{\rho_\ep^n}{\be_\ep^2+\rho_\ep^2} \ell_\ep^2 \Big( \frac{1}{(n-2)\omega_{n-1}}\rho_\ep^{2-n} +A_\ep\Big)^2
\end{equation*}
which together with the definition \eqref{eq:beta_definition} of $\beta_\ep$ leads to 
\begin{align*}
  &\quad \int_{\partial B_\ep} u_\ep \frac{\partial u_\ep}{\partial r} da^{g_\ep} +  \ell_\ep^2 \Big(\frac{1}{(n-2)\omega_{n-1}}\rho_\ep^{2-n} + A_\ep\Big)\\
  &= \ell_\ep^2 \Big( \frac{1}{(n-2)\omega_{n-1}}\rho_\ep^{2-n} +A_\ep\Big)
  \Big( 1 - \frac{\rho_\ep^n}{\be_\ep^2+\rho_\ep^2}(\rho_\ep^{2-n}+(n-2)\omega_{n-1}A_\ep)\Big)\\
  &= \ell_\ep^2 \Big( \frac{1}{(n-2)\omega_{n-1}}\rho_\ep^{2-n} +A_\ep\Big)
  \frac{\be_\ep^2-\rho_\ep^n(n-2)\omega_{n-1}A_\ep}{\be_\ep^2+\rho_\ep^2} \\
  &= \ell_\ep^2 \Big( \frac{1}{(n-2)\omega_{n-1}}\rho_\ep^{2-n} +A_\ep\Big)
  \frac{(\zeta_n^2 |A_\ep|-(n-2)\omega_{n-1}A_\ep)\rho_\ep^n}{\zeta_n^2|A_\ep|\rho_\ep^n+\rho_\ep^2} \\
  &= \ell_\ep^2 \Big( \frac{1}{(n-2)\omega_{n-1}} + A_\ep\rho_\ep^{n-2} \Big)
  \frac{\zeta_n^2 |A_\ep|-(n-2)\omega_{n-1}A_\ep}{\zeta_n^2|A_\ep|\rho_\ep^{n-2}+1}
\end{align*}
By assumption we have 
\begin{equation*}
  A_\ep \geq \xp{M}{a} +\ep.
\end{equation*}
If $\xp{M}{a}\leq 0$, then the assertion 1 of Theorem \ref{thm:xp_sigma} holds trivially. 
Thus we may assume that $A_\ep>0$ for all $\ep$.
We put 
\begin{equation*}
 \zeta_n := \frac{\sqrt{(n-2)\omega_{n-1}}}{2}.
\end{equation*}
If $\rho_\ep$ is small enough, we obtain 
\begin{align*}
  &\quad \int_{\partial B_\ep} u_\ep \frac{\partial u_\ep}{\partial r} da^{g_\ep} +  \ell_\ep^2 \Big(\frac{1}{(n-2)\omega_{n-1}}\rho_\ep^{2-n} + A_\ep\Big)\\
  &\leq \ell_\ep^2\frac{1}{(n-2)\omega_{n-1}}\Big(-\frac{(n-2)\omega_{n-1}A_\ep}{2}\Big).
\end{align*}
Inserting this into equation \eqref{eq:E_1+E_2} and using the definition \eqref{eq:gamma_definition} of $\gamma_\ep$ we get 
\begin{align*}
  E_1+E_2
  &\leq \sigma(S^n) \left( \int_{B_\ep} |\psi_\ep|^N dv^{g_\ep} \right)^{\frac{2}{N}}   \\
  & \quad{}-\ell_\ep^2\frac{1}{(n-2)\omega_{n-1}}\Big(-\frac{(n-2)\omega_{n-1}A_\ep}{2}+\frac{(n-2)\omega_{n-1}A_\ep}{4}\Big)\\
  &= \sigma(S^n) \left( \int_{B_\ep} |\psi_\ep|^N dv^{g_\ep} \right)^{\frac{2}{N}}
  -\frac{\ell_\ep^2A_\ep}{4}.
\end{align*}
Moreover, if $\rho_\ep$ is small enough, we obtain 
\begin{align*}
  \ell_\ep^2 
  &= \Big(\frac{\beta_\ep}{\be_\ep^2+\rho_\ep^2}\Big)^{n-2}
  \Big(\frac{1}{(n-2)\omega_{n-1}}\rho_\ep^{2-n}+A_\ep\Big)^{-2}\\
  &=\Big(\frac{\zeta_n |A_\ep|^{1/2}\rho_\ep^{n/2}}{\zeta_n^2A_\ep\rho_\ep^n+\rho_\ep^2}\Big)^{n-2}
  \Big(\frac{1}{(n-2)\omega_{n-1}}\rho_\ep^{2-n}+A_\ep\Big)^{-2}\\
  &=\Big(\frac{\zeta_n |A_\ep|^{1/2}\rho_\ep^{n/2}}{\zeta_n^2A_\ep\rho_\ep^{n-2}+1}\Big)^{n-2} \rho_\ep^{4-2n}
  \Big(\frac{1}{(n-2)\omega_{n-1}}+A_\ep\rho_\ep^{n-2}\Big)^{-2}\rho_\ep^{2n-4}\\
  &\geq \Big(\frac{\zeta_n |A_\ep|^{1/2}}{2}\Big)^{n-2}\rho_\ep^{\frac{n(n-2)}{2}} \frac{(n-2)^2\omega_{n-1}^2}{4}.
\end{align*}
From this and the definition \eqref{eq:rho_ep_definition} of $\rho_\ep$ it follows that 
\begin{align*}
  E_1+E_2 
  &\leq \sigma(S^n) \left( \int_{B_\ep} |\psi_\ep|^N dv^{g_\ep} \right)^{\frac{2}{N}}
  -4^{-n}\big((n-2)\omega_{n-1}\big)^{\frac{n+2}{2}}|A_\ep|^{\frac{n}{2}}\rho_\ep^{\frac{n(n-2)}{2}}\\
  &= \sigma(S^n) \left( \int_{B_\ep} |\psi_\ep|^N dv^{g_\ep} \right)^{\frac{2}{N}}
  -4^{-n}\big((n-2)\omega_{n-1}\big)^{\frac{n+2}{2}}
  \delta_n^{\frac{n}{2}}
  \frac{|A_\ep|^n}{(B_n+|A_\ep|)^n}\\
  &=:\sigma(S^n) \left( \int_{B_\ep} |\psi_\ep|^N dv^{g_\ep} \right)^{\frac{2}{N}}
  -E'_n \frac{|A_\ep|^n}{(B_n+|A_\ep|)^n}.
\end{align*}
We have 
\begin{equation*}
  D_\ep 
  \geq \left(\int_{B_\ep} |\psi_\ep|^N\, dv^{g_\ep}\right)^{\frac{2}{N}}
\end{equation*}
and therefore 
\begin{equation*}
  Q_\ep(\psi_\ep) 
  = \frac{E_1+E_2}{D_\ep}
  \leq \sigma(S^n) 
  -E'_n \frac{|A_\ep|^n}{(B_n+|A_\ep|)^n} \left(\int_{B_\ep} |\psi_\ep|^N\, dv^{g_\ep}\right)^{-\frac{2}{N}}
\end{equation*}
Moreover, using the substitution $r=\beta_\ep s$ 
we get 
\begin{align*}
  \int_{B_\ep} |\psi_\ep|^N\, dv^{g_\ep}
  &=\omega_{n-1}
  \int_0^{\rho_\ep} \Big(\frac{\beta_\ep}{\be_\ep^2+r^2}\Big)^{n} r^{n-1} \,dr\\
  &=\omega_{n-1}
  \int_0^{\rho_\ep/\be_\ep} \Big(\frac{1}{1+s^2}\Big)^{n} s^{n-1} \,ds\\
  & \leq \omega_{n-1}
  \int_0^{\infty} \Big(\frac{1}{1+s^2}\Big)^{n} s^{n-1} \,ds\\
  &=: F_n.
\end{align*}
It follows that
\begin{equation*}
  Q_\ep(\psi_\ep) 
  \leq \sigma(S^n) 
  -\frac{E'_n}{F_n^{2/N}} \frac{|A_\ep|^n}{(B_n+|A_\ep|)^n} 
\end{equation*} 
and therefore with $G_n:=(E'_nF_n^{-2/N})^{1/n}$ we get 
\begin{equation*}
  a-\ep 
  < Y(g_\ep)
  \leq \sigma(S^n) 
  -(G_n)^n \frac{|A_\ep|^n}{(B_n+|A_\ep|)^n}.
\end{equation*}
We take the limit $\ep \to 0$ and we get
\begin{equation*}
  a 
  \leq \sigma(S^n) 
  -(G_n)^n \frac{\xp{M}{a}^n}{(B_n+\xp{M}{a})^n}
  = \sigma(S^n) 
  -(G_n)^n \frac{1}{(\frac{B_n}{\xp{M}{a}}+1)^n}
\end{equation*}
It follows that
\begin{equation*}
  \frac{B_n}{\xp{M}{a}}+1
  \geq \frac{G_n}{(\sigma(S^n)-a)^{1/n}}
\end{equation*}
We put $\alpha_n:=\sigma(S^n)-\frac{(G_n)^n}{2^n}$ and we distinguish two cases.\\
a) If $a\geq\alpha_n$ we get 
\begin{equation*}
  \frac{B_n}{\xp{M}{a}}
  \geq \frac{1}{2} \frac{G_n}{(\sigma(S^n)-a)^{1/n}}
\end{equation*}
and since $a<\sigma(S^n)$ we get 
\begin{equation*}
  \xp{M}{a} 
  \leq \frac{2B_n}{G_n} (\sigma(S^n)-a)^{1/n}
  \leq \frac{2B_n}{G_n}\, \frac{\sigma(S^n)}{a}
  (\sigma(S^n)-a)^{1/n}
\end{equation*}
b) If $a\leq\alpha_n$ we have 
\begin{equation*}
  \sigma(S^n)-a \geq  \sigma(S^n)-\alpha_n
\end{equation*}
and therefore using equation \eqref{eq:xp_leq_C/a} 
\begin{equation*}
  \xp{M}{a} 
  \leq \frac{D_n^2}{4a}
  \leq \frac{D_n^2(\sigma(S^n)-a)^{1/n}}{4(\sigma(S^n)-\alpha_n)^{1/n}a}
  =\frac{D_n^2(\sigma(S^n)-a)^{1/n}}{2G_n a}.
\end{equation*}
This shows the second statement.
\end{proof}

\begin{appendix}

\section{A lemma on the Yamabe constant}

Let $M$ be a compact manifold of dimension $n\geq 3$.
For any Riemannian metric $g$ on $M$ we denote by
\begin{equation*}
  L_g:=\Delta_g+\frac{n-2}{4(n-1)}\scal_g 
\end{equation*}
the Yamabe operator of $g$ and by 
\begin{equation*}
Q_g:\quad C^{\infty}(M)\to C^{\infty}(M),\quad 
Q_g(u):=\frac{\int_M u L_g u\,dv^g}{(\int_M |u|^N)^{2/N}}
\end{equation*}
the Yamabe functional of $g$ where $N:=\frac{2n}{n-2}$.
We denote by 
\begin{equation*}
  Y(g):=\inf\{Q_g(u)\mid u\in C^{\infty}(M),\,u\neq 0\}
\end{equation*}
the Yamabe constant of $(M,g)$. 
In this Appendix, we prove the following Lemma which is used several times in the paper. 

\begin{lemma} \label{appendixlem}
  Let $(M,g)$ be a compact Riemannian manifold with $Y(g)>0$. 
  Let $p\in M$, let $\ep>0$ and let $h$ be a Riemannian metric defined on $B^{g}_p(2\ep)$. 
  We assume that $g$ and $h$ coincide at $p$. 
  Let $\chi_\ep \in C^{\infty}(M)$, $0 \leq \chi_\ep \leq 1$ be a cut-off function equal to $1$ on $B^{g}_p(\ep)$, equal to $0$ outside $B^{g}_p(2\ep)$ and which satisfies 
  $|d \chi_\ep|_g \leq \frac{C}{\ep}$ and $|\nabla^2 \chi_\ep| \leq \frac{C}{\ep^2}$. 
  Define 
  \begin{equation*}
    g_\ep:= \chi_\ep h + (1-\chi_\ep) g.
  \end{equation*}
  Then we have $\lim_{\ep\to 0} Y(g_{\ep})=Y(g)$.
\end{lemma}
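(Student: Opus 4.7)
The plan is to establish $\limsup_{\ep\to 0}Y(g_\ep)\leq Y(g)$ and $\liminf_{\ep\to 0}Y(g_\ep)\geq Y(g)$ separately. First I would collect three estimates that follow directly from the hypotheses. (i) $g_\ep\equiv g$ on $M\setminus B^g_p(2\ep)$. (ii) Since $h(p)=g(p)$ and $h$ is smooth, $|h-g|_g\leq C\ep$ on $B^g_p(2\ep)$; consequently $g_\ep$ and $g$ are $(1+O(\ep))$-quasi-isometric there, and $|dv^{g_\ep}-dv^g|\leq C\ep\,dv^g$ on $B^g_p(2\ep)$. (iii) Combining $|d\chi_\ep|\leq C/\ep$, $|\nabla^2\chi_\ep|\leq C/\ep^2$, $|h-g|=O(\ep)$ and $|\nabla(h-g)|=O(1)$ on $B^g_p(2\ep)$ gives $|\scal_{g_\ep}|=O(1/\ep)$ pointwise on $B^g_p(2\ep)$; together with $\vol(B^g_p(2\ep))=O(\ep^n)$ this yields
\begin{equation*}
  \|\scal_{g_\ep}\|_{L^{n/2}(B^g_p(2\ep),g_\ep)}=O(\ep).
\end{equation*}

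For the upper bound I would fix $\delta>0$, pick a smooth $u$ with $Q_g(u)\leq Y(g)+\delta$, and show $Q_{g_\ep}(u)\to Q_g(u)$. After integration by parts, the differences of numerator and denominator of $Q_{g_\ep}(u)-Q_g(u)$ localize to $B^g_p(2\ep)$. For the fixed smooth $u$ the $L^N$-norm and Dirichlet contributions are $O(\ep^n)$, the $\scal_g$ term is $O(\ep^n)$, and the $\scal_{g_\ep}$ term is $O(\ep^{n-1})$ by (iii) and boundedness of $u$. All tend to $0$ for $n\geq 3$, giving $\limsup Y(g_\ep)\leq Y(g)+\delta$ for every $\delta>0$.

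For the reverse inequality I would take smooth near-minimizers $u_\ep\in C^\infty(M)$ with $\|u_\ep\|_{L^N(g_\ep)}=1$ and $Q_{g_\ep}(u_\ep)\leq Y(g_\ep)+\ep$, and use $u_\ep$ directly as a test function for $Q_g$. Using (i)--(iii): $\|u_\ep\|_{L^N(g)}^N=1+O(\ep)$; the $\scal_g$ term on $B^g_p(2\ep)$ vanishes in the limit because $\|u_\ep\|_{L^2(g)}$ is bounded by H\"older from the unit $L^N$-normalization while $\vol(B^g_p(2\ep))\to 0$; the $\scal_{g_\ep}$ term on $B^g_p(2\ep)$ is $O(\ep)$ by H\"older with conjugate exponents $(n/2,N/2)$ together with (iii); and the discrepancy between $\int_{B^g_p(2\ep)}|\nabla u_\ep|_g^2\,dv^g$ and $\int_{B^g_p(2\ep)}|\nabla u_\ep|_{g_\ep}^2\,dv^{g_\ep}$ is bounded by $O(\ep)\int_{B^g_p(2\ep)}|\nabla u_\ep|_{g_\ep}^2\,dv^{g_\ep}$.

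The main obstacle is obtaining a uniform bound
\begin{equation*}
  \sup_\ep\int_M|\nabla u_\ep|_{g_\ep}^2\,dv^{g_\ep}<\infty,
\end{equation*}
without which the last term above is not controlled. I would deduce it from Aubin's inequality $Y(g_\ep)\leq \sigma(S^n)$, which forces $Q_{g_\ep}(u_\ep)\leq\sigma(S^n)+\ep$, combined with the identity
\begin{equation*}
  \int_M|\nabla u_\ep|_{g_\ep}^2\,dv^{g_\ep}=Q_{g_\ep}(u_\ep)-\frac{n-2}{4(n-1)}\int_M\scal_{g_\ep}u_\ep^2\,dv^{g_\ep}.
\end{equation*}
The scalar-curvature integral is uniformly bounded: splitting at $\partial B^g_p(2\ep)$ gives $\|\scal_g\|_\infty\|u_\ep\|_{L^2(g)}^2$ outside and $O(\ep)$ inside via (iii). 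With this bound in hand the previous estimates combine to give $Q_g(u_\ep)\leq Q_{g_\ep}(u_\ep)+o(1)\leq Y(g_\ep)+\ep+o(1)$, and taking the $\liminf$ yields $Y(g)\leq\liminf_{\ep\to 0}Y(g_\ep)$.
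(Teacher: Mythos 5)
Your proof is correct and follows essentially the same route as the paper: pointwise quasi-isometry estimates on the collar where $g_\ep$ and $g$ differ, a comparison of the Yamabe functionals $Q_g$ and $Q_{g_\ep}$ along sequences with controlled $L^N$-norm and Dirichlet energy (using that $|\scal_{g_\ep}|=O(1/\ep)$ is compensated by $\vol(B^g_p(2\ep))^{2/n}=O(\ep^2)$), and a two-sided argument with a fixed test function for the upper bound and near-minimizers for the lower bound. Your explicit appeal to Aubin's inequality $Y(g_\ep)\leq\sigma(S^n)$ to secure a uniform bound on $\int_M|\nabla u_\ep|^2_{g_\ep}\,dv^{g_\ep}$ is a welcome touch: the paper's comparison step is stated under the hypothesis that $Q_g(u_\ep)$ is bounded, and applying it to near-minimizers of $Q_{g_\ep}$ requires precisely the check you carry out.
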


\begin{proof}
  Note that by Taylor expansion we have on $B^{g}_p(2\ep)$
  \begin{equation*}
  | g-h|_g \leq C \ep \quad\text{and thus}\quad
  | g- g_\ep|_g \leq C \ep
  \end{equation*}
  for some $C$ independent of $\ep$.
  Thus for all $u\in C^{\infty}(M)$ we have 
  \begin{align*}
  \left|\int_{M} u\,dv^{g_\ep} - \int_{M} u\,dv^g \right| 
  &\leq C \ep \int_{B^g_p(2\ep)} |u|\, dv^g,\\
  \left| |du|^2_{g_\ep} - |du|^2_g \right| 
  &= \left| (g_\ep^{ij}-g^{ij}) \partial_i u \partial_ju \right| \leq C \ep |du|_g^2
  \end{align*}
  and using that $|\nabla^2\chi_\ep|\,|g-h|_g\leq\frac{C}{\ep}$ we get 
  \begin{equation*}
  |\scal_{g_\ep} - \scal_g| \leq \frac{C}{\ep}
  \quad\text{and}\quad
  |\scal_{g_\ep}|\leq\frac{C}{\ep}.
  \end{equation*}
  Let $(u_{\ep})_{\ep>0}$ be a sequence of smooth functions on $M$ such that $\int_M |u_\ep|^N\,dv^g$ and $Q_g(u_\ep)$ are bounded independently of $\ep$.
  We prove that 
  \begin{equation}
    \label{eq:lim_Q_g_ep_u_ep}
    \lim_{\ep\to 0} (Q_{g_\ep}(u_\ep)-Q_g(u_\ep))=0.
  \end{equation}
  By the triangle inequality we have
  \begin{align*}
    &\quad\left|\int_M \scal_{g_\ep}u_\ep^2\,dv^{g_\ep} - \int_M \scal_g u_\ep^2\,dv^g\right|\\
    &\leq \left|\int_M \scal_{g_\ep}u_\ep^2\,dv^{g_\ep} - \int_M \scal_{g_\ep} u_\ep^2\,dv^g\right|
    +\int_M |\scal_{g_\ep}-\scal_g| |u_\ep|^2\,dv^g
  \end{align*}
  By the H\"older inequality we get
  \begin{equation*}
    \int_M | \scal_{g_\ep} - \scal_g| |u_\ep|^2 dv^g \leq \frac{C}\ep \vol(B_p^{g}(2 \ep) )^{\frac{2}n}  \left( \int_M |u_\ep|^N dv^g \right)^{\frac{2}{N}} \leq C \ep  \left( \int_M |u_\ep|^N dv^g \right)^{\frac{2}{N}},
  \end{equation*}
  and
  \begin{align*} 
    &\quad \left|\int_M \scal_{g_\ep}u_\ep^2\,dv^{g_\ep}-\int_M \scal_{g_\ep} u_\ep^2\,dv^g\right|
    \leq C\ep \int_{B_p^{g}(2\ep)} |\scal_{g_\ep}| |u_\ep|^2\,dv^g\\
    &\leq C \vol(B_p^{g}(2 \ep) )^{\frac{2}n}  \left( \int_M |u_\ep|^N dv^g \right)^{\frac{2}{N}}
    \leq C \ep^2  \left( \int_M |u_\ep|^N dv^g \right)^{\frac{2}{N}}.
  \end{align*} 
  Moreover, we get
  \begin{align*}
    &\quad \left|\int_M |du_\ep|_{g_\ep}^2\,dv^{g_\ep}-\int_M |du_\ep|_g^2\,dv^g\right|\\
    &\leq \left|\int_M |du_\ep|_{g_\ep}^2\,dv^{g_\ep}-\int_M |du_\ep|_{g_\ep}^2\,dv^g\right|
      +\int_M \left|  |du_\ep|_{g_\ep}^2 - |du_\ep|_g^2\right|\,dv^g\\
    &\leq C\ep \int_{M} |du_\ep|_g^2\,dv^g\\
    &=C\ep \left(\int_{M} u_\ep L_g u_\ep\,dv^g 
      -\int_{M} \scal_g |u_\ep|^2\,dv^g\right)\\
    &\leq C\ep \left(\int_{M} u_\ep L_g u_\ep\,dv^g 
      + C\left(\int_{M} |u_\ep|^N\,dv^g\right)^{\frac{2}{N}}\right).
  \end{align*}
  The equation \eqref{eq:lim_Q_g_ep_u_ep} follows easily from these estimates. 
  Now let $(u_\ep)_{\ep}$ be a sequence in $C^{\infty}(M)$ such that $\int_{M}|u_\ep|^N\,dv^g=1$ and
  \begin{equation*}
    Q_{g_\ep}(u_\ep)=Y(g_\ep)+\ep
  \end{equation*}
  for all $\ep$. 
  Using \eqref{eq:lim_Q_g_ep_u_ep} we get 
  \begin{equation*}
    Y(g)
    \leq \liminf_{\ep\to 0} Q_g(u_\ep)
    = \liminf_{\ep\to 0} Q_{g_\ep}(u_\ep)
    = \liminf_{\ep\to 0}Y(g_\ep).
  \end{equation*}
  On the other hand let $u\in C^{\infty}(M)$ such that $\int_{M}|u|^N\,dv^g=1$ and  
  \begin{equation*}
    Q_g(u)
    = Y(g)+\delta
  \end{equation*}
  where $\delta>0$. 
  Using \eqref{eq:lim_Q_g_ep_u_ep} we get 
  \begin{equation*}
    \limsup_{\ep\to 0}Y(g_\ep)
    \leq \limsup_{\ep\to 0} Q_{g_\ep}(u)
    = \limsup_{\ep\to 0} Q_g(u)
    = Y(g)+\delta.
  \end{equation*}
  As $\delta\to 0$ we get $\limsup_{\ep\to 0}Y(g_\ep)\leq Y(g)$.
\end{proof}

\end{appendix}


\end{document}